\documentclass{article}

\fussy

\usepackage{dsfont}
\usepackage{graphicx}
\usepackage{amsmath}
\usepackage{amsfonts}
\usepackage{amssymb}
\usepackage{amsthm}
\usepackage{xcolor}
\usepackage{bbm}
\usepackage{algorithm}
\usepackage{algpseudocode}
\usepackage{cite}
\usepackage{hyperref}

\title{Near Optimal Finite Model POMDP Approximation of Controlled Diffusion with Partial Information}

\author{
  Yunus Emre Demirci\thanks{Department of Mathematics, University of Michigan, Ann Arbor, MI, USA. \texttt{edemirci@umich.edu}}
  \and
  Somnath Pradhan\thanks{Department of Mathematics, Indian Institute of Science Education and Research (IISER) Bhopal, Bhopal, Madhya Pradesh, India. \texttt{somnath@iiserb.ac.in}}
  \and
  Serdar Y\"{u}ksel\thanks{Department of Mathematics and Statistics, Queen's University, Kingston, ON K7L 3N6, Canada. \texttt{yuksel@queensu.ca}}
}

\usepackage{graphicx} % Required for inserting images
\usepackage{amsmath}
\usepackage{amsfonts}
\usepackage{comment}
\usepackage{xcolor}
\usepackage{graphicx}%
\usepackage{multirow}%
\usepackage{amsmath,amssymb,amsfonts}%
\usepackage{amsthm}%
\usepackage{mathrsfs}%
\usepackage[title]{appendix}%
\usepackage{xcolor}%
\usepackage{textcomp}%
\usepackage{manyfoot}%
\usepackage{booktabs}%
\usepackage{algorithm}%
\usepackage{algorithmicx}%
\usepackage{algpseudocode}%
\usepackage{listings}%
\usepackage{amsmath,nicefrac,upgreek,mathtools,url}
\usepackage{amsthm,amsmath,amsfonts,amssymb}
\usepackage[numbers]{natbib}
\usepackage{verbatim}

\newcommand{\bea}{\begin{eqnarray}}
\newcommand{\ena}{\end{eqnarray}}
\newcommand{\beas}{\begin{eqnarray*}}
\newcommand{\enas}{\end{eqnarray*}}
\newcommand{\beq}{\begin{equation}}
\newcommand{\enq}{\end{equation}}

\newcommand{\X}{\mathbb{X}}
\newcommand{\Y}{\mathbb{Y}}
\newcommand{\U}{\mathbb{U}}

\newcommand{\E}{\mathbb{E}}
\newcommand{\Pref}{\mathbb{P}_{ref}}

\newcommand{\cP}{\mathcal{P}}

\newcommand{\R}{\mathbb{R}}
\newcommand{\cT}{\mathcal{T}}
\newcommand{\cK}{\mathcal{K}}

\usepackage{bbm}

\usepackage{color}

\newtheorem{theorem}{Theorem}%  meant for continuous numbers
\newtheorem{remark}{Remark}%

\newtheorem{assumption}{Assumption}%
\newtheorem{corollary}{Corollary}%
\newtheorem{lemma}{Lemma}%

\begin{document}

\maketitle
\begin{abstract}
We study continuous-time stochastic control problems under partial information. Our main contributions are the following: (i) We show near optimality, under finite horizon as well as discounted infinite horizon cost criteria, of piece-wise constant control policies obtained via solutions to discrete-time partially observable Markov decision process (POMDP) approximations, constructed via Euler-Maruyama approximations of both state and observation processes; (ii) we obtain several regularity properties of such POMDP approximate models; and (iii) using these regularity properties, we  arrive at near optimal finite model approximations. (iv) These culminate into our main result which is that an optimal solution for the approximate finite POMDP model is near optimal for the original diffusion with partial information. 
%First we give a discrete-time approximation introduced in \cite{?} and an Euler--Maruyama approximation for both the state and observation processes and a belief-quantized approximation. 
%In particular, we show that an optimal policy for the finite POMDP is near optimal for the original diffusion. 
\end{abstract}
    
\section{Introduction}

Controlled diffusion processes with partial observation have wide-ranging applications in finance, engineering, and operations research. In these settings, a decision-maker seeks to minimize a cost criterion by selecting actions based on noisy, partial observations of an underlying continuous-time state process. The primary difficulty in these problems arises from the information structure: unlike fully observed models, the decision-maker must rely on the observation history or the belief state (the conditional distribution of the state).

While the theoretical foundations of such problems are well established (see \cite{kushner2014partial} for an excellent survey) using wide-sense admissible policies and measure transformation techniques \cite{benevs1971existence,davis1972information,davis1973dynamic, FlPa82}, obtaining explicit optimal or near optimal policies remains a challenge. Specifically, Fleming and Pardoux \cite{FlPa82} obtain existence via a belief-separation approach, whereas Bismut \cite{bismut1982partially} derives existence without relying on separation. Although not stated as such, both frameworks enforce admissibility by constructing a suitable defined Young topology on the joint space of measurement and control paths. Within this topology, the measurement process is constrained to a fixed Brownian marginal, and a conditional independence condition is imposed: future observation increments are independent of past controls and measurements to preserve the non-anticipativity of the policy. Different policy classes (on what filtration actions can be measurable on) impact conditions on the existence of solutions for controlled diffusions under partial information and whether measure change arguments can be applicable \cite{cohen2025optimal}. 

Because exact solutions are intractable, continuous-time models are typically approximated by discrete-time counterparts in practice. However, the existing discrete-time approximation literature mostly focuses on fully observed systems. Comprehensive analyses and error bounds for fully observed diffusions are provided by Kushner and Dupuis  \cite{kushner1990numerical}, \cite{kushner2001numerical}, \cite{kushner2012weak} and Krylov \cite{krylov1999approximating,krylov2000rate, krylov2008controlled} alongside other contributions \cite{Jakobsen2006, jakobsen2019improved, PY25discapprox}. 

In the uncontrolled setting, the discrete-time approximation of partially observed models (the nonlinear filtering problem) has been studied: Di Masi and Runggaldier \cite{diMasi81, diMasi82} and Kushner \cite{kushner2012weak} proved the weak convergence of the discrete-time filter to its continuous-time limit. Similar updating procedures have been deployed for financial allocation problems \cite{martinez2009}. Crisan et al. \cite{crisan2022} recently developed high-order pathwise approximations that establish the robustness of the discretized solutions building on continuous dependence of filters in (path-valued) measurements under the supremum norm \cite{bain2009fundamentals}. For further numerical methods on continuous-time stochastic control with partial information, also involving discrete-time approximations as well as HJB inspired computational methods, see \cite{tallec2019making,alt2020pomdps}. We also note that there is an intrinsic relation between approximability and filter stability, and robustness to model perturbations \cite{chigansky2009intrinsic,cohen2023exponential}.

Despite these advances, a rigorous justification for the discrete approximations for controlled  diffusions with partial information still entails several open problems. One must rigorously ensure that a policy designed for the approximate discrete model remains near-optimal when applied to the original continuous-time system.

\subsection*{Contributions}
In this paper, we provide a comprehensive approximation framework for partially observable controlled diffusions, establishing a rigorous chain of approximations that leads to implementable, finite-state algorithms. We construct a discrete-time Euler-Maruyama approximation for the partially observed model and prove that the optimal policy derived from this discretized formulation is near-optimal for the original continuous-time diffusion. Our main contributions are as follows:

\begin{enumerate}
    \item \textbf{Near Optimality of Euler-Maruyama Discrete Time Approximations.} 
While the precise mathematical details are presented later in the paper, as one of our main results, for a controlled diffusion model with partial information, given by the following dynamics 
\begin{equation}\label{cont_model0}
dX_t = b(X_t, U_t) \, dt + \sigma(X_t) \, dW_t, \quad X_0 = x \in \mathbb{R}^d,
\end{equation}
with observations
\begin{align}\label{obs_model0}
dY_t = g(X_t) \, dt + dB_t,
\end{align}
we show that policies obtained for the discrete-time model (under a corresponding approximate discrete-time criterion)
\begin{align}
X^{EM}_{kh} &= X^{EM}_{kh-h} + b(X^{EM}_{kh-h}, U_{kh-h})h + \sigma(X^{EM}_{kh-h}) (W_{kh}-W_{kh-h}) \, \label{eq:euler_state0} \\
Y^{EM}_{kh} &= Y^{EM}_{kh-h} + g(X^{EM}_{kh-h})h + (B_{kh}-B_{kh-h}),\label{eq:euler_obs0}
\end{align}
and applied to (\ref{cont_model0}-\ref{obs_model0}) for sufficiently small $h$, are near optimal under finite horizon as well as discounted infinite horizon cost criteria. 

To this end, revisiting \cite{pradhanyuksel2023DTApprx}, we first approximate the continuous time partially observable diffusion model with a discrete time model. That approximation, however, is obtained by sampling the true diffusion, and the resulting model is not a standard discrete-time partially observable Markov decision process (POMDP): In a POMDP, the measurement at each stage must be generated from the current state through a kernel $Q(dy \mid x)$ (possibly also with control dependence); in the sampled model of \cite{pradhanyuksel2023DTApprx} the measurement carried by the interval $[kh,(k+1)h)$ depends on the entire state trajectory over that interval, so no such kernel exists and, given the sampled state, the measurement is not conditionally independent of the past. Consequently the standard discrete-time POMDP machinery cannot be invoked for that model.
    We resolve this by using an approximation which is of the Euler--Maruyama type given above such that when the current hidden state is given, the (current) measurement is independent of history.
    \item \textbf{Finite Model and Filter/Belief Quantization.} The Euler--Maruyama discretization (\ref{eq:euler_state0})-(\ref{eq:euler_obs0}) gives rise to a discrete time partially observable Markov decision process (POMDP) with mathematically convenient regularity structure.
   % \item 
    %the state process. We provide strong error bounds for this approximation (Theorem \ref{thm:strong_convergence_mismatch}) and analyze the convergence of the Radon--Nikodym derivatives involved in the cost functional.
 Towards this direction, to enable numerical computation with rigorous performance guarantees, we establish the weak Feller property of the filter kernel for the time-discretized model (\ref{eq:euler_state0})-(\ref{eq:euler_obs0}). This regularity allows us to apply quantization techniques to the probability/belief space. We also show that observation quantization gives a near optimal policy.
    \item \textbf{Near Optimality of Policies from Finite POMDP obtained from time, space and measurement discretization.} The central result of this work is an approximation analysis showing that near optimality is preserved across this entire chain of approximations. We prove that a policy derived from the discretized, quantized model is near optimal for the original continuous-time controlled diffusion.
\end{enumerate}

The remainder of the paper is organized as follows. Section~\ref{sec:cont_model} introduces the continuous time model and wide-sense admissible policies, details the discrete-time approximation of \cite{pradhanyuksel2023DTApprx} and presents the Euler-Maruyama scheme. Section~\ref{numerical} presents the main approximation results. Finally, Section~\ref{asymptotic_continous} studies regularity properties and asymptotic optimality of quantized models, utilizing the supporting results of Section~\ref{numerical}.

\section{Continuous Time Partially Observed Control Model and Existence of Optimal Solutions}\label{sec:cont_model}

We consider the following continuous time controlled diffusion process which solution is defined on a complete probability space $(\Omega, \mathcal{F}, P)$:
\begin{equation}\label{cont_model}
dX_t = b(X_t, U_t) \, dt + \sigma(X_t) \, dW_t, \quad X_0 = x \in \mathbb{R}^d,
\end{equation}
where \( X_t \in \mathbb{R}^d \) is the system state, \( U_t \in \mathbb{U} \) is the control input (with \( \mathbb{U} \) compact), \( W_t \) is a $d$-dimensional standard Wiener process, $b: R^d \times \mathbb{U} \to R^d$, and $\sigma: \mathbb{R}^d \to  \mathbb{R}^{d\times d} $ are given functions.

\begin{assumption}\label{cont_bdd}
\textbf{Continuity and Boundedness.} The drift function \( b \) is jointly continuous in \( (x, u) \), and the diffusion coefficient \( \sigma \) is locally Lipschitz continuous. That is, for every \( R > 0 \), there exists a constant \( C_R > 0 \) such that
\[
\left\|\sigma(x_1) - \sigma(x_2)\right\|^2 \leq C_R \left|x_1 - x_2\right|^2, \quad \forall x_1, x_2 \in \mathbb{B}_R, \, u \in \mathbb{U},
\]
where \( \|\sigma\| := \sqrt{\operatorname{Tr}(\sigma \sigma^\top)} \) and $\mathbb{B}_R := \{x \in \mathbb{R}^d : |x| < R\}$ denotes the open ball of radius $R$ in $\mathbb{R}^d$, centered at the origin.
Additionally, we assume that both \( b \) and \( \sigma \) are uniformly bounded, i.e,  there exists a constant \( K> 0 \) such that
\[
\sup_{u \in \mathbb{U}} |b(x, u)| + \|\sigma(x)\| \leq K, \quad \forall x \in \mathbb{R}^d.
\]
\end{assumption}

\begin{assumption}\label{nondeg}
\textbf{Nondegeneracy.} There exists a constant \( \hat{C}_1 > 0 \) such that for all \( x, z \in \mathbb{R}^d \) ,
\[
\sum_{i,j=1}^d a^{ij}(x) z_i z_j \geq \hat{C}_1 |z|^2,
\]
where \( a := \frac{1}{2} \sigma \sigma^\top \).
\end{assumption}
We utilize this set of assumptions to ensure existence and uniqueness of a weak solution to (\ref{cont_model}); see \cite{BoArGh12}.

The cost associated with a control policy is given by the finite horizon objective:
\[
J_T(x, U) := \mathbb{E}_x^U \left[ \int_0^T c(X_t, U_t) \, dt + c_T(X_T) \right],
\]
where \( c : \mathbb{R}^d \times \mathbb{U} \to \mathbb{R}_+ \) and \( c_T : \mathbb{R}^d \to \mathbb{R}_+ \) are bounded and continuous.

However, the signal process \( X_t \) is not directly observable. Instead, we observe:
\begin{align}\label{obs_model}
dY_t = g(X_t) \, dt + dB_t,
\end{align}
where \( g : \mathbb{R}^d \to \mathbb{R}^M \) is continuous and bounded, and \( B_t \) is a \( \mathbb{R}^M \)-valued Wiener process independent of \(W_t\).

The control process \( \{U_t\} \) must be adapted to the filtration generated by the observation process \( \{Y_t\} \); that is, each \( U_t \) must be \( \sigma(Y_s : 0 \leq s \leq t) \)-measurable. These are referred to as \emph{strict-sense admissible policies}.
To establish the existence of optimal controls, Fleming and Pardoux \cite{FlPa82} introduced a broader class of policies, called \emph{wide-sense admissible policies}. The idea is to apply Girsanov's transformation so that the observation process becomes an independent Wiener process under a new probability measure \(  \mathbb{P}_{ref} \). 
Following \cite[Section 3.2]{FlPa82}, admissible controls are defined as probability measures on \( C([0, T]; \mathbb{R}^M) \times \mathcal{P}_\lambda([0, T] \times \mathbb{U}) \) (where $\mathcal{P}_\lambda([0, T] \times \mathbb{U})$ is the space of positive finite measures on $[0,T]\times \mathbb{U}$ with its fixed marginal on $[0,T]$ to be the Lebesgue measure $\lambda$), with the first marginal being the Wiener measure. Furthermore, under measure \( \mathbb{P}_{ref} \), the increments \( Y_r - Y_t \) are independent of \( \{X_0, W_{[0,t]}, Y_{[0,t]}, m_{[0,t]}\} \) for all \( 0 \leq t \leq r \leq T \).

This means that, rather than requiring controls to depend on past observations, we enforce that future observations be conditionally independent of past controls and observations. Denote the set of wide-sense admissible policies by \( \mathfrak{U}_{\mathrm{WS}} \), equipped with the weak topology. Here the \emph{weak topology} on $\mathfrak{U}_{\mathrm{WS}}$ is the topology of weak convergence of probability measures on $C([0,T];\mathbb{R}^M)\times\cP_\lambda([0,T]\times\U)$, following \cite[Section~3.2]{FlPa82} and \cite[Section~2]{pradhanyuksel2023DTApprx}: a sequence $m_n\to m$ if and only if $\int f\,dm_n\to \int f\,dm$ for every bounded continuous function $f$ on this product space. Under this topology $\mathfrak{U}_{\mathrm{WS}}$ is compact, since the first marginal is fixed to be the Wiener measure, the second marginal has fixed time-marginal $\lambda$ on $[0,T]$ and takes values in the compact set $\U$, and Prohorov's theorem applies (see \cite[Theorem~3.1]{FlPa82} and \cite[Lemma~3.3.1]{BoArGh12}). Typical elements are denoted by $m$.

Under the new measure \( \mathbb{P}_{ref} \), we have:
\begin{align}
dX_t &= b(X_t, U_t) \, dt + \sigma(X_t) \, dW_t, \\
dY_t &= dB_t,
\end{align}
which transforms the problem into one with independent observations. 

We have $\mathbb{P} \ll \mathbb{P}_{ref}$, and the Radon--Nikodym derivative is
\[
\frac{d \mathbb{P}}{d \mathbb{P}_{ref}} = G_T
\]
where
\[
G_t := \exp\left( \int_0^t g(X_s) \, dY_s - \frac{1}{2} \int_0^t |g(X_s)|^2 \, ds \right),
\]
and the stochastic integral is with respect to \( Y_s \). Since \( g \) is bounded, Novikov's condition \cite[Corollary~3.5.13]{KaSh91} ensures that \( \{G_t\} \) is a nonnegative martingale with mean one. It ensures that it is a well-defined Radon-Nikodym derivative. This relation allows us to view the partially observable problem as one with independent
measurements, with the dependence pushed to the Radon-Nikodym derivative $G_T$.

\subsection{Existence of Optimal Policies and Discrete-Time Approximations}\label{sec:cont_existence_disc}

\cite{pradhanyuksel2023DTApprx} states the following theorem which essentially proves continuity of the value function over wide-sense admissible policies, existence of optimal policy in wide-sense admissible class, and near optimality of piecewise-constant wide-sense policies.

\begin{theorem}\cite[Theorem 3.1]{pradhanyuksel2023DTApprx}\label{thm:PY_existence}
Suppose Assumptions \ref{cont_bdd} and \ref{nondeg} hold. Then:
\begin{itemize}
  \item[(i)] The total cost
  \[
  J_T(x, m) = \mathbb{E}_{x, \mathbb{P}_{ref}}^m\left[ \frac{d\mathbb{P}}{d \mathbb{P}_{ref}} \left( \int_0^T c(X_t, m_t) \, dt + c_T(X_T) \right) \right]
  \]
  is continuous over the space of wide-sense admissible policies \( \mathfrak{U}_{\mathrm{WS}} \) .

  \item[(ii)] There exists an optimal policy in \( \mathfrak{U}_{\mathrm{WS}} \).

  \item[(iii)] For every \( \varepsilon > 0 \), there exists a piecewise-constant control policy in \( \mathfrak{U}_{\mathrm{WS}} \) that is \( \varepsilon \)-optimal.
\end{itemize}
\end{theorem}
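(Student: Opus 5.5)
The plan is to prove (i) by a weak-convergence/stability argument under $\Pref$, deduce (ii) from (i) and compactness, and obtain (iii) by density of piecewise-constant wide-sense policies combined with (i).

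\textbf{Part (i).} Take $m_n \to m$ in $\mathfrak{U}_{\mathrm{WS}}$. Under $\Pref$, $Y$ is a Wiener process independent of $W$ and $X_0=x$, and $X$ solves the relaxed SDE
\[
dX_t=\int_\U b(X_t,u)\,m_t(du)\,dt+\sigma(X_t)\,dW_t .
\]
I will work with the joint laws of $(X^n,Y,m^n,W)$ on $C([0,T];\R^d)\times C([0,T];\R^M)\times\cP_\lambda([0,T]\times\U)\times C([0,T];\R^d)$.

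1. \emph{Tightness.} The boundedness of $b$ and $\sigma$ in Assumption~\ref{cont_bdd} gives Kolmogorov-type moment bounds on the increments of $X^n$, uniformly in $n$. The remaining components are tight because $\U$ is compact and the laws of $Y$ and $W$ are fixed.

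2. \emph{Identification of the limit.} Along a subsequence the joint laws converge. Use the martingale problem characterization of the limit. The drift term is continuous in $(x,m)$ for the Young topology, since $b$ is jointly continuous and $\int f(t)\,m_t(du)\,dt$ depends continuously on $m$ for fixed continuous integrands. The wide-sense independence condition, that $Y_r-Y_t$ is independent of the past, is preserved under weak limits. Nondegeneracy (Assumption~\ref{nondeg}) together with the local Lipschitz property of $\sigma$ gives uniqueness in law of the relaxed SDE given $(Y,m)$. Hence the limit is the law induced by $m$, and the whole sequence converges.

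3. \emph{Passage to the cost.} Write $J_T(x,m)=\E^{m}_{\Pref}[G_T\,\Phi]$, where $\Phi=\int_0^T\int_\U c(X_t,u)\,m_t(du)\,dt+c_T(X_T)$ is bounded and continuous in $(X,m)$. The main obstacle is $G_T$, because the stochastic integral $\int_0^T g(X_s)\,dY_s$ is not a continuous functional of the paths. I would handle it in two steps.
\begin{itemize}
\item[(a)] Replace $g(X_s)$ by a step approximation $g(X_{t_k})$ on a fine partition. The resulting integral is a continuous function of $(X,Y)$.
\item[(b)] Control the error uniformly in $n$ via It\^o's isometry under $\Pref$, using that $Y$ is Brownian and independent of $X$. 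The bound $\E|g(X_s)-g(X_{t_k})|^2\to0$ holds uniformly in $n$ by uniform continuity of $g$ on compacts and the uniform modulus from step 1.
\end{itemize}
Uniform integrability follows because $\E_{\Pref}[G_T^2]\le e^{T\|g\|_\infty^2}$. Together these give $J_T(x,m_n)\to J_T(x,m)$.

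\textbf{Part (ii).} This is immediate: $\mathfrak{U}_{\mathrm{WS}}$ is compact, as stated above, and $J_T(x,\cdot)$ is continuous by (i), so the minimum is attained.

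\textbf{Part (iii).} The goal is to approximate any $m\in\mathfrak{U}_{\mathrm{WS}}$ by piecewise-constant wide-sense policies $m^h$ with $m^h\to m$ weakly, and then apply (i).
\begin{itemize}
\item[1.] First approximate $m$ by a relaxed control that is piecewise constant in time. On each interval $[kh,(k+1)h)$, use the time-average of $m$ over the \emph{previous} interval. The delay keeps the policy measurable with respect to the past, so the independence of future increments of $Y$ is retained.
\item[2.] Then replace each relaxed measure by a measure on a finite grid of $\U$, chosen as a measurable function of $Y$ up to time $kh$. If non-relaxed policies are required, use chattering to pass from relaxed to ordinary actions.
\end{itemize}
The delayed time-average converges to $m$ in the Young topology as $h\to0$, because time-averages of $L^1$ functions converge and a shift by one interval vanishes. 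Continuity from (i) then yields $\varepsilon$-optimality for small $h$.
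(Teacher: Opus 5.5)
The paper does not prove this statement; it imports it from \cite{pradhanyuksel2023DTApprx}, which works in the wide-sense framework of \cite{FlPa82}. Your outline is a correct, self-contained version of the standard compactness--continuity argument in that framework. Its ingredients all hold up:
tightness plus identification of the limit; replacing $\int_0^T g(X_s)\,dY_s$ by a Riemann sum, with an It\^o-isometry error bound that is uniform in $n$; uniform integrability from $\E_{\Pref}[G_T^2]\le e^{\|g\|_\infty^2T}$; compactness for (ii); and the delayed time-average for (iii). The delay is the right device there, because it keeps $m^h_{[0,t]}$ a function of $m_{[0,t]}$ and so preserves the independence of future increments of $Y$.

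\textbf{Part (i): make the independence structure explicit.} Uniqueness in law ``given $(Y,m)$'' is meaningful only because, under $\Pref$, the pair $(Y,m)$ is independent of $(X_0,W)$. You should say explicitly that this survives the limit: the joint law of $(W,(Y,m^n))$ is Wiener measure times $m_n$. Conditionally on $(Y,m)$, the drift $\int_\U b(x,u)\,m_t(du)$ is then deterministic, bounded and measurable in $t$. Uniqueness in law therefore follows from Girsanov, using Assumption~\ref{nondeg} to make $\sigma^{-1}$ bounded, together with strong uniqueness for $dX=\sigma(X)\,dW$.

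\textbf{Part (iii), step 2: an unjustified claim.} The clause that the quantized measure is ``chosen as a measurable function of $Y$ up to time $kh$'' is not justified as stated. The delayed average $\bar m_k$ depends on the wide-sense policy's own randomization, not on $Y$ alone. Replacing it by something like $\E[\bar m_k\mid Y_{[0,kh]}]$ changes the joint law of $(Y,m)$, and hence the cost, because $X$ and $G_T$ depend nonlinearly on the control path.

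The clause is also unnecessary. The statement only asks for a piecewise-constant element of $\mathfrak{U}_{\mathrm{WS}}$, and step 1 already provides one. If you want finitely many actions, quantize $\bar m_k$ by a measurable map of $\bar m_k$ itself and chatter; this keeps adaptedness.

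If you genuinely want observation-measurable policies, use randomization rather than conditioning:
\begin{enumerate}
\item[(a)] By the non-anticipativity condition, realize $\bar m_k=f_k(Y_{[0,kh]},\bar m_{0:k-1},\xi_k)$, where the $\xi_k$ are i.i.d.\ uniforms independent of $(X_0,W,Y)$.
\item[(b)] The cost is then the $\xi$-average of the costs of the resulting $Y$-adapted policies.
\item[(c)] Fix a realization of $\xi$ whose cost is no larger than the average.
\end{enumerate}
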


\cite{pradhanyuksel2023DTApprx} discretizes time for the controlled diffusion as follows, with piecewise-constant control policy and step size $h=1/n$:
\begin{align}\label{disc_policy}
X^h_{t} &= X^h_{0} + \int_{0}^{t} b(X^h_s, U^h_{s}) \, ds + \int_{0}^{t} \sigma(X^h_s) \, dW_s, \\
Y^h_t &= Y^h_{0} + \int_{0}^t g(X^h_s) \, ds + \int_{0}^t dB_s
\end{align}
with initial condition \( X_0 = x \),
where
\[
U_s^h := U_{\eta_h(s)}, \quad \eta_h(s):=  h \lfloor s/h \rfloor
\]
where $\lfloor \cdot \rfloor$ denotes the floor function. That is $X^h$ is the true controlled diffusion driven by a control that is constant on each interval $[kh, (k+1)h)$.

Now we can discretize signal and path valued observation with
\[
\hat{X}_k^h := X_{kh}^h, \quad \hat{U}_k^h := U^h_{kh}, \quad \hat{Y}_k^h := Y^h_{[kh, (k+1)h)}
\]
and, with \( N_h := \lfloor T/h \rfloor \), the discrete-time cost becomes
\begin{align}\label{def:new_value_fn}
J_h(x, U^h) = \mathbb{E}_x^{U^h} \left[ \sum_{k=0}^{N_h - 1} \hat{c}_h(X_{kh}^h, U_{kh}) + c_T(X_{N_h h}^h) \right],
\end{align}
where:
\begin{align}\label{def:new_cost_fn}
\hat{c}_h(x, u) := \mathbb{E}_x \left[ \int_0^h c(X_s, u) \, ds \,\bigg|\, X_0 = x \right],
\end{align}
here $X_s$ is the signal at time $s$, with initial condition $X_0=x$ and under constant action $u$.

This defines a discrete-time process with path-valued observations \( \hat{Y}_k^h \), and cost \( \hat{c}_h \).

\begin{theorem}\cite[Theorem 5.2]{pradhanyuksel2023DTApprx}\label{PY_pathvalued}
Under Assumptions \ref{cont_bdd} and \ref{nondeg}, as \( h \to 0 \)  (equivalently $n \to \infty$ ) :

\begin{enumerate}
    \item The optimal value of the discrete-time approximation converges to that of the original continuous-time control problem.
\item Moreover, for any $\varepsilon>0$, there exists a sufficiently small $h>0$ such that the policy obtained from the discrete-time model is $\varepsilon$-optimal for the original continuous-time setting.
\end{enumerate}
\end{theorem}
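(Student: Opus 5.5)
The plan is to compare the continuous-time problem and the sampled discrete-time problem through their common representation under the reference measure $\Pref$. The claim of Theorem~\ref{PY_pathvalued} is that $\inf_{U^h} J_h(x,U^h)\to \inf_{m\in\mathfrak{U}_{\mathrm{WS}}} J_T(x,m)$, and that discrete-time optimal policies are $\varepsilon$-optimal for the original problem when $h$ is small.

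\textbf{Step 1: exactness of the discrete-time cost.} For a piecewise-constant policy $U^h$ (constant on each $[kh,(k+1)h)$ and measurable with respect to the observation path up to $kh$), I will first show that $J_h(x,U^h)$ equals the continuous-time cost $J_T(x,U^h)$ of $X^h$, up to an $O(h)$ error.
\begin{itemize}
\item By the Markov property of $X^h$ on each interval with the action frozen, conditioning on $\mathcal{F}_{kh}$ gives
\[
\E\Big[\int_{kh}^{(k+1)h} c(X^h_s,U_{kh})\,ds \,\Big|\, \mathcal{F}_{kh}\Big] = \hat c_h(X^h_{kh},U_{kh}).
\]
This uses that the control is fixed during the interval.
\item The only discrepancy is the leftover interval $[N_h h,T]$ and the terminal cost evaluated at $N_h h$ rather than $T$.
\item The leftover integral is bounded by $h\|c\|_\infty$.
\item The terminal term $|\E[c_T(X_{N_hh})-c_T(X_T)]|$ tends to $0$. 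This follows from continuity of $c_T$, the bound $\E|X_T-X_{N_hh}|^2\le Ch$ (using boundedness of $b,\sigma$ from Assumption~\ref{cont_bdd}), and dominated convergence.
\end{itemize}
Hence $\sup_{U^h}|J_h(x,U^h)-J_T(x,U^h)|\to 0$.

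\textbf{Step 2: sandwich the optimal values.}
\begin{itemize}
\item Piecewise-constant policies with step $h$ form a subset of $\mathfrak{U}_{\mathrm{WS}}$. Therefore $\inf J_T\le \inf_{U^h} J_T(x,U^h)$.
\item Conversely, Theorem~\ref{thm:PY_existence}(iii) gives, for any $\varepsilon$, a piecewise-constant $\varepsilon$-optimal policy.
\end{itemize}
The main obstacle is that this policy lives on some grid $h_0$ and is only wide-sense. I need it realized, or approximated, on every finer grid $h\to0$ by policies that are admissible for the discrete model with path-valued observations $\hat Y^h_k$.

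I would handle this as follows.
\begin{itemize}
\item Refine the grid to $h=h_0/j$. A policy constant on $h_0$-intervals is also constant on $h$-intervals.
\item Wide-sense randomization can be absorbed. Under $\Pref$ the observation is Brownian and independent of future increments, so a wide-sense piecewise-constant policy is a mixture (over an independent randomization variable) of observation-measurable policies.
\item By the continuity in Theorem~\ref{thm:PY_existence}(i) together with the linearity of $J_T$ in the mixing measure, some deterministic selection does at least as well.
\item A measurable selection $\hat Y^h_{[0,k)}\mapsto U_{kh}$ may then be approximated by continuous functions of the path via Lusin's theorem. This is compatible with weak convergence, and continuity (i) is invoked again.
\end{itemize}
For general $h$ not dividing $h_0$, I would round the switching times, at a cost vanishing with $h$ by (i). This yields $\limsup_h \inf_{U^h}J_T(x,U^h)\le \inf J_T+\varepsilon$. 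Combined with Step 1, this proves part 1.

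\textbf{Step 3: near optimality of discrete-time policies.} For part 2, let $U^{h,*}$ be optimal, or $\delta$-optimal, for $J_h$. Extend it to continuous time by holding actions on each interval. Then by Step 1 and Step 2,
\[
J_T(x,U^{h,*})\le J_h(x,U^{h,*})+o(1)\le \inf J_h+\delta+o(1)\le \inf J_T+\varepsilon+\delta+o(1).
\]
Choosing $h$ small gives $\varepsilon$-optimality.
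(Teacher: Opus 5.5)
Your proposal is correct and is essentially the sandwich argument behind the cited result. The paper does not reprove Theorem~\ref{PY_pathvalued}, but it uses the same reasoning as your Steps~1 and~3 at the end of the proof of Theorem~\ref{thm:main_approximation}: exactness $J_h = J_T$ on $h$-piecewise-constant policies, the inclusion of discrete-time policies in $\mathfrak{U}_{\mathrm{WS}}$, and density of piecewise-constant policies from Theorem~\ref{thm:PY_existence}(iii), with randomization removed by averaging over the independent mixing variable. Two details need tightening: the switching times must be rounded \emph{up} to the $h$-grid, since rounding down would let the action anticipate the observations, and the Lusin step is unnecessary here, because the derandomized observation-measurable policy is already admissible for the model with path-valued observations $\hat Y^h_k$ (Lusin is only needed for the sampled-observation refinement).
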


% While this result provides near-optimal policies via discretization, it assumes access to path-valued measurements. However, we want to discritize observation also, we only observe \( Y_{kh} \) in the original measurement space. Thus, we seek a discrete-time model where \( U_{kh} = \gamma(Y_{ih} : i = 0, \ldots, k) \) under an admissible policy \( \gamma \), which is a measurable map taking values in \( \mathcal{P}(\mathbb{U}) \). The following result addresses this issue.

% \begin{align}\label{apprx_system}
% X_{(k+1)h} &= X_{kh} + \int_{kh}^{(k+1)h} b(X_s, U_{kh}) \, ds + \int_{kh}^{(k+1)h} \sigma(X_s) \, dW_s, \nonumber \\
% Y_t &= Y_{kh} + g(X_{kh}) + \int_{kh}^t dB_s, \quad t \in [kh, (k+1)h).
% \end{align}

% \begin{theorem}\cite[Theorem 5.3]{pradhanyuksel2023DTApprx}
% Suppose Assumptions A1 and A2 hold. Then:
% \begin{itemize}
%     \item[(i)] The optimal value of the discrete-time model (\ref{apprx_system}) converges to that of the original continuous-time model.
%     \item[(ii)] For every \( \varepsilon > 0 \), there exists \( h > 0 \) such that the discrete-time policy
%     \[
%     U_{kh}^\varepsilon = \gamma^\varepsilon(Y_{ih} : i = 0, \ldots, k)
%     \]
%     constructed via (\ref{apprx_system}) is \( \varepsilon \)-optimal for the original problem. 
% \end{itemize}
% \end{theorem}

While the preceding result provides near-optimal policies via discretization, it fundamentally assumes that the controller has access to continuous path-valued measurements over each interval. A more practical objective is to formulate a model that utilizes only discrete-time sampled measurements, i.e., $Y_{kh}$. Specifically, at time $kh$, the control $U_{kh}$ should depend solely on the sampled history $\{Y_{ih} : i=0, \dots, k\}$ via an admissible policy $\gamma$.

By employing Lusin's theorem\footnote{\cite[Theorem~7.1.13]{Bog07}: let $(\Omega,\mathcal{F},\mu)$ be a probability space with $\Omega$ Polish, $(\U,d)$ a separable metric space and $\gamma:\Omega\to\U$ measurable. For every $\epsilon>0$ there exists a compact set $\Omega_\epsilon\subset\Omega$ with $\mu(\Omega_\epsilon)>1-\epsilon$ such that the restriction $\gamma|_{\Omega_\epsilon}$ is continuous.} and measure transformation techniques (see \cite[Section 5.2]{pradhanyuksel2023DTApprx}), we can establish the near optimality of such sampled policies. Applying the Girsanov transformation to \eqref{disc_policy} changes the system to the reference measure $\Pref$ under which the observation process $Y$ becomes a standard Brownian motion independent of the state:
\begin{align}\label{DiscretePOMSomSIAM}
X_{(k+1) h} &= X_{k h} + \int_{k h}^{(k+1) h} b\left(X_s, U_{k h}\right) ds + \int_{k h}^{(k+1) h} \sigma\left(X_s\right) dW_s, \\
Y_t &= Y_{k h} + \int_{k h}^t dB_s, \quad t \in[k h,(k+1) h).
\end{align}

Under the reference measure $\Pref$, wide-sense policies are generated by measurable mappings on the observation path space: at each stage $k$, the (relaxed) control action is $\gamma_k(Y_{[0,kh]})$, where $\gamma_k : C([0,kh]; \R^M) \to \cP(\U)$ is a measurable mapping. $\cP(\U)$, equipped with the weak convergence topology, is a convex and compact metric space. Under $\Pref$, the law of $Y_{[0,kh]}$ is the Wiener measure on the Polish space $C([0,kh]; \R^M)$, so by Lusin's theorem, for every $\epsilon > 0$ there exists a compact set of paths $\Omega^k_\epsilon \subset C([0,kh]; \R^M)$ with $\Pref(Y_{[0,kh]} \in \Omega^k_\epsilon) > 1 - \epsilon/N_h$ such that the restriction of $\gamma_k$ to $\Omega^k_\epsilon$ is continuous, where $N_h = \lfloor T/h \rfloor$ is the number of control stages. Taking the intersection of the corresponding events over all stages $k$, which we denote by $\Omega_\epsilon$ below, we obtain an event on which this continuity holds for every stage simultaneously. Since $\cP(\U)$ is a convex compact set, by the Dugundji extension theorem\footnote{\cite[Theorem~4.1]{dugundji1951}: let $X$ be a metric space, $A \subset X$ a closed subset, $L$ a locally convex linear space, and $f : A \to L$ a continuous mapping. Then $f$ admits a continuous extension $F : X \to L$ such that $F(X)$ is contained in the convex hull of $f(A)$; this generalizes the Tietze extension theorem \cite[Theorem~35.1]{Munkres}.}, this restriction can be extended to a continuous mapping $\gamma_{C,k} : C([0,kh]; \R^M) \to \cP(\U)$ on the entire path space, with $\gamma_{C,k} = \gamma_k$ on $\Omega^k_\epsilon$; see also \cite{saldiyukselGeoInfoStructure,pradhanyuksel2023DTApprx}. We write $\gamma_C = (\gamma_{C,k})_k$ for the resulting policy, and
\[
\Omega_\epsilon := \bigcap_{k=0}^{N_h - 1} \left\{ Y_{[0,kh]} \in \Omega^k_\epsilon \right\}, \qquad \Pref(\Omega_\epsilon) > 1 - \epsilon,
\]
for the event on which the two policies produce exactly the same control actions.

Now, to quantify the performance loss of replacing the true measurable policy $\gamma$ with its continuous approximation $\gamma_C$, consider the cost functional under the measure $\Pref$, which includes the Radon--Nikodym derivative $G_T = \frac{d\mathbb{P}}{d\Pref}$. The key observation is that $\Omega_\epsilon$ is an event of the observation path only, and under $\Pref$ the observation path is a Brownian motion whose law does not depend on the policy; therefore $\Omega_\epsilon$ is the same event, with the same probability, under both policies. On $\Omega_\epsilon$ the two policies produce the same control paths, hence the same state paths and the same Radon--Nikodym derivatives, so any difference in cost arises on the complement $\Omega_\epsilon^c$. Since the stage cost $c$ and the terminal cost $c_T$ are uniformly bounded, the difference in expected costs is strictly bounded by
\[
|J_h(\gamma) - J_h(\gamma_C)| \le M\, \E_{\Pref}\left[ \left( G^{\gamma}_T + G^{\gamma_C}_T \right) \mathbf{1}_{\Omega_\epsilon^c} \right] \le 2 M e^{\|g\|_\infty^2 T / 2} \sqrt{\epsilon},
\]
where $M = T\|c\|_\infty + \|c_T\|_\infty$, and $G^{\gamma}_T$ and $G^{\gamma_C}_T$ denote the Radon--Nikodym derivatives under the two policies. For the last inequality, we use the Cauchy--Schwarz inequality together with the following second moment bound: since $g$ is bounded,
\begin{equation}\label{eq:second_moment}
\E_{\Pref}\left[ G_T^2 \right] \le e^{\|g\|_\infty^2 T}.
\end{equation}
To see this, write
\[
G_T^2 = \exp\left( \int_0^T 2 g(X_t) \, dY_t - \frac{1}{2}\int_0^T |2 g(X_t)|^2 \, dt \right) \exp\left( \int_0^T |g(X_t)|^2 \, dt \right)
\le e^{\|g\|_\infty^2 T}\, \mathcal{E}_T,
\]
where $\mathcal{E}_T$ is the exponential martingale associated with the integrand $2g(X_t)$, whose $\Pref$-expectation is one, again by Novikov's condition.
Then, for either policy,
\[
\E_{\Pref}\left[ G_T \mathbf{1}_{\Omega_\epsilon^c} \right]
\le \left(\E_{\Pref}\left[ G_T^2 \right]\right)^{1/2} \sqrt{\Pref(\Omega_\epsilon^c)}
\le e^{\|g\|_\infty^2 T/2} \sqrt{\epsilon}.
\]
Letting $\epsilon \to 0$, we see that $\gamma$ can be approximated by $\gamma_C$ with an arbitrarily small loss in performance (see also \cite[Proposition 6.1]{saldiyukselGeoInfoStructure}).

Next, we evaluate this continuous policy $\gamma_C$ on the discrete measurements. Under $\Pref$, $Y_t$ is a standard Brownian motion, meaning its sample paths are almost surely uniformly continuous on the compact interval $[0, T]$. Consequently, the piecewise-constant version of the discrete measurements, defined as $\hat{Y}_t^{h} := Y_{ih}$ for $t \in [ih, ih+h)$, converges uniformly to the continuous path $Y_t$ as $h \to 0$; that is, $\sup_{t \in [0,T]} |\hat{Y}_t^{h} - Y_t| \to 0$ almost surely. Because each $\gamma_{C,k}$ is a continuous mapping on the path space, evaluating it on the discretized sample path yields $\gamma_{C,k}(\hat{Y}^h_{[0,kh]}) \to \gamma_{C,k}(Y_{[0,kh]})$ almost surely, and the policy on the left uses only the finitely many samples $\{Y_{ih} : i \le k\}$, as desired. Almost sure convergence of the actions implies that the joint law of the observation path and the induced relaxed control converges in the weak topology of $\mathfrak{U}_{\mathrm{WS}}$, so by the continuity of the cost in the policy (Theorem~\ref{thm:PY_existence}(i)) the difference in expected cost between evaluating $\gamma_C$ on the full continuous path and the discretely sampled path vanishes as $h \to 0$.

This directly leads to the following refinement for discrete-time approximations, for both the finite-horizon and the discounted cost criteria.

% \begin{theorem}
% Suppose Assumptions (A1) and (A2) hold. Then:
% \begin{enumerate}
%     \item[(i)] The optimal value of the discrete-time model converges to the optimal value of the original continuous-time model.
%     \item[(ii)] For every $\epsilon > 0$, there exists a discretization step $h > 0$ such that the policy obtained from the discrete-time approximation is near-optimal for the original continuous-time system.
% \end{enumerate}
% \end{theorem}

% However, it need to access path-valued observations. And We can similarly establish the following result for the case where only discrete-time samples of the observation process are used (instead of full path-valued observations). In particular:
% \[
% dX_t = b(X_t, U_t) \, dt + \sigma(X_t) \, dW_t, \quad
% dY_t = g(X_t) \, dt + dB_t,
% \]
% with the same initial condition and control structure, and define the evolution approximately (Model 3) as:
% \[
% \begin{aligned}
% X^h_{(k+1)h} &= X^h_{kh} + \int_{kh}^{kh+h} b(X_s, U_{kh}) ds + \int_{kh}^{kh+h} \sigma(X_{s}) W_{s} , \\
% Y^h_{(k+1)h} &= Y^h_{kh} + g(X^h_{kh+h}) h + (B_{(k+1)h} - B_{kh}).
% \end{aligned}
% \]

% Now one can define discrete time structure with the following components:
% \[
% X_k^h := X_{kh}^h, \quad U_k^h := U_{kh}, \quad \bar{Y}_k^h := Y^h_{(k+1)h} - Y^h_{kh}.
% \]
% For this new model we have following result:
\begin{theorem}\cite[Theorem 5.3]{pradhanyuksel2023DTApprx}
Under Assumptions \ref{cont_bdd} and \ref{nondeg}:

\begin{enumerate}
\item The optimal value of the discrete-time model  convergences to the optimal value of the original continuous-time model.
\item For every $\epsilon>0$, there exists $h>0$ so that the solution of the discrete-time approximation gives a policy (i.e., a policy $U_{k h}^\epsilon=\gamma^\epsilon\left(Y_{i h}, i \in\{0,1, \cdots, k\}\right)$) which is near optimal for the original continuous-time model.
\end{enumerate}
\end{theorem}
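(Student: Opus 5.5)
We prove both parts with one sandwich argument built from the results stated so far. Write $J^*$ for the optimal value of the continuous-time problem over $\mathfrak{U}_{\mathrm{WS}}$, which exists by Theorem~\ref{thm:PY_existence}(ii). Write $J^*_{h,\mathrm{path}}$ for the optimal value of the discrete-time model with path-valued observations $\hat Y^h_k$, and $J^*_{h,\mathrm{samp}}$ for the optimal value when actions may depend only on $\{Y_{ih}: i\le k\}$.

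The upper bound is immediate. Every sampled policy $U_{kh}=\gamma(Y_{ih},i\le k)$, held constant on $[kh,(k+1)h)$, is a piecewise-constant wide-sense admissible policy of the original model. Under $\Pref$, the future increments of $Y$ are independent of the past because $\gamma$ is non-anticipative. Therefore $J^*\le J^*_{h,\mathrm{samp}}$. In the same way, sampled policies form a subclass of path-valued policies, so $J^*_{h,\mathrm{path}}\le J^*_{h,\mathrm{samp}}$.

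For the lower bound, fix $\epsilon>0$. By Theorem~\ref{PY_pathvalued}, for all small $h$ there is a path-valued policy $\gamma=(\gamma_k)$, with $\gamma_k:C([0,kh];\R^M)\to\cP(\U)$, whose cost on the true diffusion is within $\epsilon$ of $J^*$.
\begin{enumerate}
\item Apply the Lusin/Dugundji construction above to obtain the continuous policy $\gamma_C$. The Cauchy--Schwarz estimate, combined with the second moment bound \eqref{eq:second_moment}, gives $|J(\gamma)-J(\gamma_C)|\le 2Me^{\|g\|_\infty^2T/2}\sqrt{\epsilon}$.
\item Replace $Y_{[0,kh]}$ by its piecewise-constant interpolation $\hat Y^{h'}$ built from the samples on a finer grid $h'=h/m$. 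Continuity of $\gamma_{C,k}$ together with uniform continuity of Brownian paths gives almost sure convergence of the actions as $m\to\infty$.
\item Almost sure convergence of the actions implies weak convergence in $\mathfrak{U}_{\mathrm{WS}}$ of the joint law of the observation path and the relaxed control. By Theorem~\ref{thm:PY_existence}(i), the costs then converge.
\end{enumerate}
The resulting policy uses only samples and is piecewise constant on the grid. Its cost is at most $J^*+\epsilon+2Me^{\|g\|_\infty^2T/2}\sqrt\epsilon+o(1)$. Hence $\limsup_{h\to0}J^*_{h,\mathrm{samp}}\le J^*$, which combined with the upper bound proves (1).

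One technicality concerns step 2. The policy $\gamma_{C,k}$ was built for the coarse grid $h$, and the number of stages changes when we refine to $h'$. We resolve this by fixing $h$ for the continuous policy and using the fine samples only as inputs to it, keeping the actions constant on the coarse intervals. The resulting policy is measurable with respect to the samples at spacing $h'$, so it is admissible for the sampled model at step $h'$. Part (1) is then stated along the refined sequence $h'\to 0$. For general $h\to0$ we use the same construction with $\hat Y^h$ evaluated on the coarse-grid policy of an earlier fixed step.

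For (2), take $\gamma^\epsilon$ to be an optimal (or $\epsilon$-optimal) policy of the sampled discrete-time problem, with the discrete cost \eqref{def:new_value_fn}. The sampled discrete-time model is exactly the true diffusion sampled under piecewise-constant control, and $\hat c_h$ integrates the running cost exactly. Consequently the cost of $\gamma^\epsilon$ applied to the continuous system equals its discrete cost, up to the $O(h)$ terminal mismatch from $N_hh\neq T$. By (1), this cost is within $\epsilon$ of $J^*$ for small $h$.

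The main obstacle is step 3: deducing weak convergence in $\mathfrak{U}_{\mathrm{WS}}$, and hence cost convergence, from almost sure convergence of actions. This has to be done while the Radon--Nikodym factor $G_T$ also depends on the policy. We handle this by working under $\Pref$, where the observation law is fixed. We then invoke the continuity in Theorem~\ref{thm:PY_existence}(i), which already absorbs the dependence of $G_T$ on the policy. Verifying that the limit joint law preserves the conditional independence of future increments is immediate, since the actions are functions of past samples.
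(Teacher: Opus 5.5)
Your proposal is correct and follows essentially the paper's own argument. You use Lusin's theorem and the Dugundji extension to pass to continuous stage maps $\gamma_{C,k}$, with the Cauchy--Schwarz/second-moment estimate \eqref{eq:second_moment}. You then feed those maps interpolated samples and invoke the continuity of the cost in Theorem~\ref{thm:PY_existence}(i), with the inclusion of sampled policies in $\mathfrak{U}_{\mathrm{WS}}$ supplying the other inequality.

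Separating the control grid $h$ from the sampling grid $h'$ is a worthwhile repair of a point the paper glosses over: it lets $h\to 0$ inside $\hat Y^h$ while the stage maps themselves depend on $h$. For a general, non-nested $h'$, move each switching time $kh$ to the next grid point $\lceil kh/h'\rceil h'$. Also feed $\gamma_{C,k}$ a continuous (e.g.\ piecewise-linear) interpolation of $\{Y_{ih'}: ih'\le kh\}$, since $\gamma_{C,k}$ is only defined on $C([0,kh];\R^M)$. After that, the same almost sure convergence in $\mathfrak{U}_{\mathrm{WS}}$ together with Theorem~\ref{thm:PY_existence}(i) finishes the argument.
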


\section{Main Results}\label{numerical}

In this section, we present our main results: a discrete-time Euler--Maruyama approximation, regularity properties of the resulting discrete-time POMDP, and quantization-based near optimality.

\subsection{Discrete-Time Euler--Maruyama Approximation}\label{sec:cont_em}

The discrete-time model given in Section~\ref{sec:cont_existence_disc} (equation~\eqref{disc_policy}) suffers from the following: in a typical POMDP (see Section~\ref{sec:cont_reg}), given the current state, the current measurement is conditionally independent of the past states, measurements, and actions. This does not hold under equation~\eqref{disc_policy}. To see this, observe that the model in \eqref{disc_policy} is a sampled version of the true SDE starting at $X_0$, driven by a piecewise-constant policy $U_t = U_{kh}$ for $t \in [kh, (k+1)h)$:
\begin{align}
    X_{kh}^h &= X_{kh-h}^h + \int_{kh-h}^{kh} b(X^h_s, U_{kh-h}) \, ds + \int_{kh-h}^{kh} \sigma(X^h_s) \, dW_s, \label{eq:sde_true}\\
    Y_{kh}^h &= Y_{kh-h}^h + \int_{kh-h}^{kh} g(X_s^h) \, ds + \int_{kh-h}^{kh} dB_s. \label{eq:sde_true_obs}
\end{align}
However, even after defining the measurement to be the increment $Y^h_{(k+1)h}-Y^h_{kh}$, the desired property does not hold: the increment equals $\int_{kh}^{(k+1)h} g(X^h_s)\,ds + (B_{(k+1)h}-B_{kh})$ and thus depends on the entire state trajectory over the interval $[kh,(k+1)h]$, not only on the sampled state $X^h_{kh}$. Consequently, given the current sampled state, the measurement is not conditionally independent of the past, and no measurement kernel of the form $Q(dy \mid x)$ exists for the sampled model. 

To resolve this, we construct an Euler--Maruyama discretization utilizing a similar observation structure, and evaluate the discretization error between the piecewise-constant diffusion \eqref{disc_policy} and its Euler--Maruyama counterpart. For the rest of the paper, in addition to Assumptions \ref{cont_bdd} and \ref{nondeg}, we assume the following.

\begin{assumption}\label{assump:lipschitz}
There exists a constant $K > 0$ (without loss of generality, the same constant as in Assumption \ref{cont_bdd}) such that for all $x, y \in \R^d$ and $u \in \U$:
    \[
        |b(x,u) - b(y,u)| + \|\sigma(x) - \sigma(y)\| \leq K |x - y|.
    \]
\end{assumption}

The Euler--Maruyama discretization is defined via the grid points $t_i = ih$. Let $X^{EM}_0 = X_0$ and $Y^{EM}_0 = Y_0$. The system evolves for $k = 1, 2, \dots$ as
\begin{align}
X^{EM}_{kh} &= X^{EM}_{kh-h} + \int_{kh-h}^{kh} b(X^{EM}_{kh-h}, U_{kh-h}) \, ds + \int_{kh-h}^{kh} \sigma(X^{EM}_{kh-h}) \, dW_s, \label{eq:euler_state} \\
Y^{EM}_{kh} &= Y^{EM}_{kh-h} + \int_{kh-h}^{kh} g(X^{EM}_{kh-h}) \, ds + \int_{kh-h}^{kh} dB_s.\label{eq:euler_obs}
\end{align}

For policy admissibility, realize that under $\Pref$ the observation increments for both models are simply $B_{(k+1)h} - B_{kh}$, which are independent Gaussian random variables. Because the observation process $Y$ is entirely decoupled from the state dynamics under $\Pref$, any wide-sense admissible policy adapted to the filtration generated by $Y$ is strictly applicable to both models.

To analyze the approximation error, we define the continuous-time version (interpolation) of the Euler--Maruyama scheme. Recall that $\eta_h(s) = h \lfloor s/h \rfloor$; the continuous-time process is
\begin{equation} \label{eq:euler_diff_init}
    X^{EM}_{t} = X_0 + \int_{0}^{t}  b(X^{EM}_{\eta_h(s)}, U_{\eta_h(s)}) \, ds + \int_{0}^{t}  \sigma(X^{EM}_{\eta_h(s)}) \, dW_s.
\end{equation}

The following theorem establishes the convergence of the state processes, uniform over all piecewise-constant admissible policies.

\begin{theorem}\label{thm:strong_convergence_mismatch}
Suppose Assumptions \ref{cont_bdd} and \ref{assump:lipschitz} hold, and fix a finite horizon $T > 0$. Consider the sampled diffusion \eqref{disc_policy} and its Euler--Maruyama interpolation \eqref{eq:euler_diff_init} (the continuous-time versions of \eqref{eq:sde_true} and \eqref{eq:euler_state}, respectively), both starting from the same initial condition $X_0$ and driven by the same Wiener process $W$ and the same open loop piecewise-constant control process $U_{\eta_h(\cdot)}$. Then
\[
    \E\left[ \sup_{0 \leq t \leq T} |X_t^{EM} - X_t^h|^2 \right] \leq  C_2 T e^{2C_1T},
\]
where $C_1=2(T+4)K^2$ and $C_2=4C_1K^2(h^2+h)$; hence $C_2 \to 0$ as the step size $h \to 0$. 
\end{theorem}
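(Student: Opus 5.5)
Set $e_t := X^{EM}_t - X^h_t$. Since both processes start from $X_0$ and use the same $W$ and the same control $U_{\eta_h(\cdot)}$, subtracting \eqref{disc_policy} from \eqref{eq:euler_diff_init} gives
\[
e_t = \int_0^t \bigl[b(X^{EM}_{\eta_h(s)},U_{\eta_h(s)}) - b(X^h_s,U_{\eta_h(s)})\bigr]ds + \int_0^t \bigl[\sigma(X^{EM}_{\eta_h(s)}) - \sigma(X^h_s)\bigr]dW_s .
\]
The control argument is identical in both drifts, so Assumption \ref{assump:lipschitz} applies pointwise in $u$. No uniformity over policies is lost, because every constant below depends only on $K$, $T$ and $h$.

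\textbf{Step 1: sup-moment inequality for $e$.} Use $(a+b)^2\le 2a^2+2b^2$. Bound the drift term by Cauchy--Schwarz in time, which gives a factor $T$. Bound the martingale term by Doob's $L^2$ maximal inequality (or BDG), which gives a factor $4$. The result is
\[
\E\sup_{r\le t}|e_r|^2 \le 2(T+4)K^2\int_0^t \E|X^{EM}_{\eta_h(s)} - X^h_s|^2\,ds .
\]
This is the source of $C_1 = 2(T+4)K^2$.

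\textbf{Step 2: split the integrand.} Write $X^{EM}_{\eta_h(s)} - X^h_s = e_s + (X^{EM}_{\eta_h(s)} - X^{EM}_s)$ and use $|a+b|^2\le 2|a|^2+2|b|^2$. The first piece is bounded by $\sup_{r\le s}|e_r|^2$. The second is the one-step increment of the Euler scheme over $[\eta_h(s),s]$, a window of length at most $h$. Since $b$ and $\sigma$ are bounded by $K$ (Assumption \ref{cont_bdd}), the Itô isometry gives
\[
\E|X^{EM}_s - X^{EM}_{\eta_h(s)}|^2 \le 2K^2h^2 + 2K^2 h = 2K^2(h^2+h).
\]
Collecting constants yields, for $\phi(t):=\E\sup_{r\le t}|e_r|^2$,
\[
\phi(t)\le 2C_1\int_0^t\phi(s)\,ds + C_2 t \le C_2T + 2C_1\int_0^t \phi(s)\,ds,
\]
with $C_2$ of the order $4C_1K^2(h^2+h)$. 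The exact numerical prefactor depends on how the factors of $2$ are allocated, and I would adjust the bookkeeping to match the stated $C_2$.

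\textbf{Step 3: Gronwall.} Gronwall's inequality gives $\phi(T)\le C_2Te^{2C_1T}$, and $C_2\to0$ as $h\to0$. Before applying it, $\phi$ must be known to be finite. This follows from boundedness of $b$ and $\sigma$, which makes both processes square integrable on $[0,T]$.

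The main obstacle, mild here, is the bookkeeping that makes the constants match the statement exactly, together with justifying the maximal inequality for the stochastic integral. No localization is needed, since the coefficients are bounded and globally Lipschitz, and wide-sense admissibility causes no trouble because the control process enters both equations identically.
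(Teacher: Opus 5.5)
Your proposal is correct and follows the paper's proof essentially step for step: Cauchy--Schwarz on the drift and Doob/BDG on the martingale give the factor $2(T+4)K^2$, you split the integrand as $e_s + (X^{EM}_{\eta_h(s)}-X^{EM}_s)$, you use the one-step increment bound $2K^2(h^2+h)$, and you close with Gr\"onwall. No extra bookkeeping is needed, since the factor $2$ from the split times $C_1$ times $2K^2(h^2+h)$ gives exactly $C_2 = 4C_1K^2(h^2+h)$.
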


\begin{proof}
Throughout, $U$ may be any $\U$-valued process adapted to a filtration with respect to which $W$ is a Wiener process; in particular, any wide-sense admissible policy applied under $\Pref$ qualifies. Let $Z_t = X_t^h- X_t^{EM}$. We omit $U_{\eta_h(s)}$ from the notation for simplicity; all estimates below are uniform in the control since the constants in Assumptions \ref{cont_bdd} and \ref{assump:lipschitz} are uniform over $\U$. Subtracting \eqref{eq:euler_diff_init} from \eqref{disc_policy}:
\[
    Z_t = \int_0^t \left( b(X^h_s) - b(X_{\eta_h(s)}^{EM}) \right) \, ds + \int_0^t \left( \sigma(X_s^{h}) - \sigma(X_{\eta_h(s)}^{EM}) \right) \, dW_s.
\]
Since $b$ and $\sigma$ are bounded, all moments appearing below are finite. We aim to bound $\E[\sup_{0 \leq r \leq t} |Z_r|^2]$. Using the inequality $|a + b |^2 \leq 2|a|^2 + 2|b|^2$, we have
\[
    |Z_t|^2 \leq 2 \left| \int_0^t (b(X^h_s) - b(X_{\eta_h(s)}^{EM})) \, ds \right|^2 + 2\left| \int_0^t (\sigma(X_s^{h}) - \sigma(X_{\eta_h(s)}^{EM})) \, dW_s \right|^2.
\]
Therefore, taking the supremum and expectation, we have
\begin{align} \label{eq:error_split_3_sup}
    \E[\sup_{0 \leq r \leq t} |Z_r|^2] \;\leq\;&  2 \E\!\left[ \sup_{0 \leq r \leq t} \!\left| \int_0^r (b(X^h_s) - b(X_{\eta_h(s)}^{EM})) \, ds \right|^2 \right]\nonumber\\
    &\;+\;2 \E\!\left[ \sup_{0 \leq r \leq t}\! \left| \int_0^r (\sigma(X^h_s) - \sigma(X_{\eta_h(s)}^{EM})) \, dW_s \right|^2 \right].
\end{align}
Using the Cauchy--Schwarz inequality on the drift term and the Burkholder--Davis--Gundy (BDG) inequality \cite[Theorem~3.3.28]{KaSh91} (which gives, for a continuous local martingale $M$ with quadratic variation $\langle M\rangle$, the bound $\E[\sup_{0\le r\le t}|M_r|^2]\le 4\,\E[\langle M\rangle_t]$) on the diffusion term, together with the Lipschitz property of $b$ and $\sigma$, we obtain
\begin{align*}
    \E\!\left[ \sup_{0 \leq r \leq t} \left| \int_0^r (b(X_s^h) - b(X_{\eta_h(s)}^{EM})) \, ds \right|^2 \right] &\leq t K^2 \int_0^t \E|X_s^h - X_{\eta_h(s)}^{EM}|^2 \, ds, \\
    \E\!\left[ \sup_{0 \leq r \leq t} \left| \int_0^r (\sigma(X_s^h) - \sigma(X_{\eta_h(s)}^{EM})) \, dW_s \right|^2 \right] &\leq 4 K^2 \int_0^t \E|X_s^h - X_{\eta_h(s)}^{EM}|^2 \, ds.
\end{align*}
For the first term, by Cauchy--Schwarz we have $\bigl(\int_0^r f(s)\,ds\bigr)^2 \le r\int_0^r f(s)^2\,ds \le t \int_0^t f(s)^2\, ds$ for $r \le t$, and by the Lipschitz property of $b$ we have $|b(x)-b(y)|^2\le K^2 |x-y|^2$. For the second term, BDG is applied to the local martingale $M_r = \int_0^r (\sigma(X_s^h)-\sigma(X^{EM}_{\eta_h(s)}))\,dW_s$, whose quadratic variation $\langle M\rangle_t = \int_0^t \|\sigma(X_s^h)-\sigma(X^{EM}_{\eta_h(s)})\|^2\,ds$ is bounded by $K^2 \int_0^t |X_s^h - X^{EM}_{\eta_h(s)}|^2\, ds$ by the Lipschitz property of $\sigma$.

Substituting these back into \eqref{eq:error_split_3_sup}, we have
\[
    \E\left[ \sup_{0 \leq r \leq t} |Z_r|^2 \right]
    \leq  2(t+4)K^2 \int_0^t \E|X_s^h - X_{\eta_h(s)}^{EM}|^2 \, ds
    \leq C_1 \int_0^t \E|X_s^h - X_{\eta_h(s)}^{EM}|^2 \, ds,
\]
where $C_1= 2(T+4)K^2$. We split the error term inside the integral:
\[
    |X_s^h - X_{\eta_h(s)}^{EM}|^2 \leq 2|X_s^h - X_s^{EM}|^2 + 2|X_s^{EM} - X_{\eta_h(s)}^{EM}|^2.
\]
Recalling that $Z_s=X_s^h- X_s^{EM}$ and that $\E|Z_s|^2 \le \E[\sup_{0\le r\le s}|Z_r|^2]$, we obtain
\begin{equation} \label{eq:gronwall_setup}
    \E\left[ \sup_{0 \leq r \leq t} |Z_r|^2 \right] \leq   2C_1 \int_0^t \E\left[\sup_{0 \le r\le s}|Z_r|^2\right] ds
    + 2C_1 \int_0^t \E|X_s^{EM} - X_{\eta_h(s)}^{EM}|^2 \, ds .
\end{equation}
The term $\E|X_s^{EM} - X_{\eta_h(s)}^{EM}|^2$ represents the increment of the Euler scheme within a partial step. Applying $|a+b|^2\le 2|a|^2+2|b|^2$ to the increment $X_s^{EM}-X^{EM}_{\eta_h(s)} = b(X^{EM}_{\eta_h(s)})(s-\eta_h(s)) + \sigma(X^{EM}_{\eta_h(s)})(W_s-W_{\eta_h(s)})$, using the uniform bound  $\sup_{u \in \U} |b(x, u)| + \|\sigma(x)\| \le K$, and noting that $\sigma(X^{EM}_{\eta_h(s)})$ is measurable with respect to the past while $W_s-W_{\eta_h(s)}$ is an independent increment, we get
\begin{align*}
     \E|X_s^{EM} - X_{\eta_h(s)}^{EM}|^2 &\leq 2 \E| b (X^{EM}_{\eta_h(s)}) (s-\eta_h(s)) |^2 + 2 \E| \sigma (X^{EM}_{\eta_h(s)}) (W_s - W_{\eta_h(s)}) |^2 \\
     &\leq 2 K^2 h^2 + 2 K^2 h.
\end{align*}

Let $\phi(t) = \E[\sup_{0 \leq r \leq t} |Z_r|^2]$. Equation \eqref{eq:gronwall_setup} combined with the bound $\E|X_s^{EM}-X^{EM}_{\eta_h(s)}|^2 \le 2K^2(h^2+h)$  yields
\[
    \phi(t) \leq C_2 t  + 2C_1 \int_0^t \phi(s) \, ds,
\]
where $C_2 = 4 C_1 K^2 (h^2 + h)$. By Gr\"onwall's lemma\footnote{\cite[Problem~5.2.7]{KaSh91}: if $\phi:[0,T]\to\R_+$ is integrable and satisfies $\phi(t)\le a(t) + b\int_0^t \phi(s)\,ds$ for some constant $b\ge 0$ and nondecreasing $a(\cdot)\ge 0$, then $\phi(t)\le a(t)\,e^{bt}$ for all $t\in[0,T]$.} with $a(t)=C_2 t$ and $b=2C_1$, we have
\[
    \phi(t) \leq  C_2 t e^{2C_1 t}.
\]
Setting $t=T$ yields the claimed bound. Note that $C_2 = 4C_1 K^2(h^2+h) = O(h)$ as $h\to 0$, so the strong error $\left(\E\left[ \sup_{0 \leq t \leq T} |X_t^{EM} - X_t^h|^2 \right]\right)^{1/2}$ vanishes at rate $\sqrt{h}$.
\end{proof}

We now present the main approximation theorem: we establish that optimizing the computable Euler--Maruyama POMDP gives a policy that is asymptotically near optimal for the true diffusion model. Let $J_h(\gamma)$ denote the expected cost under the sampled diffusion model, and let $J^{EM}_h(\gamma)$ denote the expected cost under the fully discrete Euler--Maruyama POMDP. Utilizing the discrete-time stage cost $\hat{c}_h$ of \eqref{def:new_cost_fn}, these costs are evaluated as
\begin{align*}
    J_h(\gamma) &= \E_{\mathbb{P}}^{\gamma} \left[ \sum_{k=0}^{N_h - 1} \hat{c}_h(X^h_{kh}, U_{kh}) + c_T(X^h_{N_h h}) \right], \\
    J^{EM}_h(\gamma) &= \E_{\mathbb{P}}^{\gamma} \left[ \sum_{k=0}^{N_h - 1} \hat{c}_h(X^{EM}_{kh}, U_{kh}) + c_T(X^{EM}_{N_h h}) \right],
\end{align*}
where $N_h = \lfloor T/h \rfloor$. (Without loss of generality, we can assume that $N_h =  T/h$)

Girsanov's theorem cannot be applied directly to the fully discrete model; a piecewise-constant observation process does not admit a valid Radon--Nikodym derivative with respect to the continuous-time Wiener measure. Therefore, we proceed in two steps. First, we define an intermediate continuous-observation Euler--Maruyama model for which Girsanov's measure transformation is valid, and we bound the error between this intermediate model and the exact sampled diffusion. Second, we use Lusin's theorem to prove that restricting the policy to depend strictly on discrete observation samples incurs no asymptotic performance loss.

We define the intermediate model so that the state $X^{EM}_t$ evolves according to the continuous-time Euler--Maruyama interpolation \eqref{eq:euler_diff_init}, while the observation process $\tilde{Y}_t$ remains a continuous diffusion driven by the step-wise state:
\begin{equation} \label{eq:em_continuous_obs}
    d\tilde{Y}_t = g(X^{EM}_{\eta_h(t)}) \, dt + dB_t, \quad \tilde{Y}_0 = 0.
\end{equation}

Under the reference probability measure $\Pref$ induced by Girsanov's transformation, the observation process (which we denote simply as $Y_t$) is a standard Brownian motion, entirely independent of the state dynamics, and any wide-sense admissible policy $\gamma$ is a measurable mapping on the continuous path space $C([0,T]; \R^M)$.

Let $J_h(\gamma)$ denote the expected cost of the exact sampled diffusion, and $J^{EM, cont}_h(\gamma)$ denote the expected cost of the intermediate continuous-observation Euler--Maruyama model. Under the reference measure $\Pref$, these costs are evaluated using their respective Radon--Nikodym derivatives:
\begin{align}
    G_T^h &:= \exp\left( \int_0^T g(X^h_{t}) \, dY_t - \frac{1}{2} \int_0^T |g(X^h_{t})|^2 \, dt \right), \nonumber\\
    G_T^{EM} &:= \exp\left( \int_0^T g(X^{EM}_{\eta_h(t)}) \, dY_t - \frac{1}{2} \int_0^T |g(X^{EM}_{\eta_h(t)})|^2 \, dt \right), \label{eq:RN_defs}
\end{align}
so that
\begin{align*}
    J_h(\gamma) &= \E_{\Pref}^\gamma\left[ G_T^h \left( \sum_{k=0}^{N_h - 1} \hat{c}_h(X^h_{kh}, U_{kh}) + c_T(X^h_{N_h h}) \right)\right], \\
    J^{EM,cont}_h(\gamma) &= \E_{\Pref}^\gamma\left[ G_T^{EM} \left( \sum_{k=0}^{N_h - 1} \hat{c}_h(X^{EM}_{kh}, U_{kh}) + c_T(X^{EM}_{N_h h}) \right)\right].
\end{align*}
Note that the exponent of $G_T^h$ involves the true state path $X^h_t$ (consistently with the observation model in \eqref{disc_policy}), whereas the exponent of $G_T^{EM}$ involves the piecewise-constant state $X^{EM}_{\eta_h(t)}$ (consistently with \eqref{eq:em_continuous_obs}).

\begin{lemma} \label{lemma:intermediate_convergence}
Suppose Assumptions \ref{cont_bdd} and \ref{assump:lipschitz} hold, the stage cost $c$ and terminal cost $c_T$ are Lipschitz continuous (with constants $L_c, L_{c_T}$) and uniformly bounded, and the observation function $g$ is bounded and Lipschitz continuous with constant $L_g$. Then there exists a constant $C^* < \infty$, depending only on $(K, L_c, L_{c_T}, L_g, \|g\|_\infty, T)$, such that for all $h \in (0, 1]$,
\begin{equation}
    \sup_{\gamma \in \mathfrak{U}_{\mathrm{WS}}} \left| J_h(\gamma) - J^{EM, cont}_h(\gamma) \right| \le C^* \sqrt{h};
\end{equation}
in particular, the costs converge uniformly over all wide-sense admissible policies as $h \to 0$.
\end{lemma}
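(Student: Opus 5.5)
We work throughout under $\Pref$, where for any fixed $\gamma\in\mathfrak{U}_{\mathrm{WS}}$ both models are driven by the same $W$, the same observation Brownian motion $Y$, and hence the same control process $U_{\eta_h(\cdot)}=\gamma(Y)$. Writing $S^h:=\sum_{k}\hat c_h(X^h_{kh},U_{kh})+c_T(X^h_{N_hh})$ and $S^{EM}$ analogously, we split
\[
J_h(\gamma)-J^{EM,cont}_h(\gamma)=\E_{\Pref}\bigl[(G^h_T-G^{EM}_T)S^h\bigr]+\E_{\Pref}\bigl[G^{EM}_T(S^h-S^{EM})\bigr],
\]
and bound each term by $C\sqrt h$ with constants independent of $\gamma$. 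Uniformity in $\gamma$ comes from the remark in the proof of Theorem~\ref{thm:strong_convergence_mismatch}: that estimate holds for any $\U$-valued control adapted to a filtration for which $W$ is Brownian, which includes every wide-sense policy under $\Pref$. Theorem~\ref{thm:strong_convergence_mismatch} gives
\[
\E\sup_t|X^h_t-X^{EM}_t|^2\le C_2Te^{2C_1T}=O(h).
\]
The one-step bound $\E|X^{EM}_t-X^{EM}_{\eta_h(t)}|^2\le 2K^2(h^2+h)$ then gives
\[
\E|X^h_t-X^{EM}_{\eta_h(t)}|^2=O(h)\quad\text{uniformly in }t\le T .
\]

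\textbf{Second term.} Since $|S^h|,|S^{EM}|$ are bounded, Cauchy--Schwarz gives
\[
\bigl|\E[G^{EM}_T(S^h-S^{EM})]\bigr|\le \bigl(\E[(G^{EM}_T)^2]\bigr)^{1/2}\bigl(\E|S^h-S^{EM}|^2\bigr)^{1/2}.
\]
The second moment bound \eqref{eq:second_moment} applies verbatim to $G^{EM}_T$ and gives $\E[(G^{EM}_T)^2]\le e^{\|g\|_\infty^2T}$. Next we need $\hat c_h$ to be Lipschitz in $x$ with constant $O(h)$. Since $\hat c_h(x,u)=\E\int_0^h c(X^{x,u}_s)\,ds$, a synchronous coupling of two solutions with the same $W$ and Lipschitz $b,\sigma$ gives $\E|X^{x}_s-X^{y}_s|\le e^{C s}|x-y|$, hence $|\hat c_h(x,u)-\hat c_h(y,u)|\le L_c h e^{Ch}|x-y|$. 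It follows that
\[
|S^h-S^{EM}|\le \bigl(L_c e^{C}\,N_hh+L_{c_T}\bigr)\sup_t|X^h_t-X^{EM}_t| ,
\]
and $N_hh=T$. Hence the second term is $O(\sqrt h)$.

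\textbf{First term (main obstacle).} Here $|S^h|\le M:=T\|c\|_\infty+\|c_T\|_\infty$, so it suffices to show $\E_{\Pref}|G^h_T-G^{EM}_T|\le C\sqrt h$. Write $G^h_T=e^{A}$ and $G^{EM}_T=e^{A'}$ and use $|e^a-e^{a'}|\le (e^a+e^{a'})|a-a'|$. Cauchy--Schwarz then gives
\[
\E|G^h_T-G^{EM}_T|\le \bigl(2\E[(G^h_T)^2+(G^{EM}_T)^2]\bigr)^{1/2}\bigl(\E|A-A'|^2\bigr)^{1/2}.
\]
The first factor is at most $2e^{\|g\|_\infty^2T/2}$ by \eqref{eq:second_moment}. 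For the second, set $\delta_t:=g(X^h_t)-g(X^{EM}_{\eta_h(t)})$, so that
\[
A-A'=\int_0^T\delta_t\,dY_t-\tfrac12\int_0^T\bigl(|g(X^h_t)|^2-|g(X^{EM}_{\eta_h(t)})|^2\bigr)\,dt .
\]
For the stochastic integral, $Y$ is a Brownian motion under $\Pref$ and the integrands are adapted to the joint filtration of $(W,Y)$; in the wide-sense setting the future increments of $Y$ are independent of the past, so $Y$ remains a Brownian motion in this filtration. The It\^o isometry then gives $\E|\int\delta\,dY|^2\le L_g^2\int_0^T\E|X^h_t-X^{EM}_{\eta_h(t)}|^2\,dt=O(h)$. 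For the Lebesgue integral we use $\bigl||g(x)|^2-|g(y)|^2\bigr|\le 2\|g\|_\infty L_g|x-y|$ together with Cauchy--Schwarz in time, which also yields $O(h)$. Therefore
\[
\E|G^h_T-G^{EM}_T|\le 2e^{\|g\|_\infty^2T/2}\,C\sqrt h .
\]

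Combining the two terms gives $C^*\sqrt h$ for all $h\in(0,1]$, with $C^*$ depending only on $(K,L_c,L_{c_T},L_g,\|g\|_\infty,T)$ and not on $\gamma$. Taking the supremum over $\gamma$ proves the claim.
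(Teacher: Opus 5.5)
Your proposal is correct and follows essentially the same route as the paper: the same two-term split (you add and subtract $\E_{\Pref}[G^{EM}_T S^h]$ where the paper uses $\E_{\Pref}[G^h_T\,\mathcal{C}^{EM}]$, which is immaterial since both densities satisfy the bound \eqref{eq:second_moment}), the $O(h)$-Lipschitz bound on $\hat c_h$ via synchronous coupling combined with Theorem~\ref{thm:strong_convergence_mismatch} for the cost difference, and the inequality $|e^a-e^{a'}|\le (e^a+e^{a'})|a-a'|$ with Cauchy--Schwarz, It\^o's isometry and the Lipschitz property of $g$ for the density difference. Your explicit remark that the wide-sense independence of future $Y$-increments keeps $Y$ a Brownian motion in the joint filtration, which justifies the It\^o isometry, is a point the paper leaves implicit; note also that in both arguments $C^*$ additionally depends on $\|c\|_\infty$ and $\|c_T\|_\infty$ through $M$.
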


\begin{proof}
Throughout, all expectations are under $\Pref$ and all constants are independent of the policy $\gamma$. Write
\[
\mathcal{C}^h := \sum_{k=0}^{N_h - 1} \hat{c}_h(X^h_{kh}, U_{kh}) + c_T(X^h_{N_h h}), \qquad
\mathcal{C}^{EM} := \sum_{k=0}^{N_h - 1} \hat{c}_h(X^{EM}_{kh}, U_{kh}) + c_T(X^{EM}_{N_h h}),
\]
and note that $0 \le \mathcal{C}^h, \mathcal{C}^{EM} \le M := T\|c\|_\infty + \|c_T\|_\infty$. By adding and subtracting the cross term $\E_{\Pref}[G_T^{h}\, \mathcal{C}^{EM}]$, the absolute difference is strictly bounded by the sum of two expectations:
\begin{align}
    |J_h(\gamma) - J^{EM, cont}_h(\gamma)| \leq \E_{\Pref}\!\left[ G_T^h \,\big| \mathcal{C}^h - \mathcal{C}^{EM} \big| \right]
    + \E_{\Pref}\!\left[ \left| G_T^h - G_T^{EM} \right| \mathcal{C}^{EM} \right] =: (I) + (II). \label{eq:two_terms}
\end{align}

We first show that the discrete-time stage cost $\hat{c}_h$ is Lipschitz continuous in the state, with a constant of order $O(h)$. Recall the definition \eqref{def:new_cost_fn} of $\hat{c}_h$, and write $X_s^x$ for the solution of the diffusion at time $s$, initialized at $X_0 = x$ and under the constant action $u$. For any two initial states $x$ and $y$, couple the two solutions by driving them with the same Wiener process (synchronous coupling); then
\begin{align*}
|\hat{c}_h(x, u) - \hat{c}_h(y, u)| &\le \E \left[ \int_0^h \left| c(X_s^x, u) - c(X_s^y, u) \right| \, ds \right] \le L_c \int_0^h \E \left[ | X_s^x - X_s^y | \right] \, ds.
\end{align*}
Because $b$ and $\sigma$ are Lipschitz (Assumption \ref{assump:lipschitz}), the solution map is stable with respect to the initial condition: letting $Z_s := X_s^x - X_s^y$, applying the same Cauchy--Schwarz and BDG arguments as in the proof of Theorem~\ref{thm:strong_convergence_mismatch} to
$Z_s = (x-y) + \int_0^s (b(X_r^x)-b(X_r^y))\,dr + \int_0^s (\sigma(X_r^x)-\sigma(X_r^y))\,dW_r$
gives $\E[|Z_s|^2]\le 3|x-y|^2 + C_0\int_0^s \E[|Z_r|^2]\,dr$ with $C_0 := 3(h+4)K^2$; Gr\"onwall's lemma then yields $\E[|Z_s|^2]\le 3 e^{C_0 s}|x-y|^2$, and by Jensen's inequality
\[
\E \left[ | X_s^x - X_s^y | \right] \le \sqrt{3}\, e^{C_0 s / 2} |x - y|.
\]
Substituting this bound into the integral yields
\begin{align*}
|\hat{c}_h(x, u) - \hat{c}_h(y, u)| \le \sqrt{3}\, L_c\, e^{C_0 h/2}\, h\, |x - y| =: \kappa_0\, h\, |x-y|, \qquad \forall u \in \U,
\end{align*}
uniformly in $u$.

We can now bound the first expectation $(I)$. By the above and the Lipschitz continuity of $c_T$,
\[
\big| \mathcal{C}^h - \mathcal{C}^{EM} \big|
\le \kappa_0 h \sum_{k=0}^{N_h-1} |X^h_{kh} - X^{EM}_{kh}| + L_{c_T} |X^h_{N_h h} - X^{EM}_{N_h h}|
\le \left( \kappa_0 T + L_{c_T} \right) S,
\]
where $S := \sup_{0\le t \le T} |X^h_t - X^{EM}_t|$. Note that $G_T^h$ is \emph{not} uniformly bounded; however, by the Cauchy--Schwarz inequality and the second moment bound \eqref{eq:second_moment},
\[
(I) \le \left( \E\left[ (G^h_T)^2 \right] \right)^{1/2} \left( \E\left[ \big|\mathcal{C}^h - \mathcal{C}^{EM}\big|^2 \right] \right)^{1/2}
\le e^{\|g\|_\infty^2 T / 2} \left( \kappa_0 T + L_{c_T} \right) \left( \E[S^2] \right)^{1/2}.
\]
By Theorem~\ref{thm:strong_convergence_mismatch}, $\E[S^2] \le C_2 T e^{2 C_1 T}$ uniformly over policies, and $C_2 = 4C_1K^2(h^2+h) \le 8 C_1 K^2 h$ for $h \le 1$. Hence $(I) \le \kappa_1 \sqrt{h}$ with $\kappa_1 := e^{\|g\|_\infty^2 T / 2} \left( \kappa_0 T + L_{c_T} \right) \left( 8 C_1 K^2 T e^{2 C_1 T} \right)^{1/2}$, independent of $\gamma$.

It remains to bound the second expectation $(II)$. Since $\mathcal{C}^{EM} \le M$, it suffices to bound $\E_{\Pref}|G_T^h - G_T^{EM}|$. Write $G_T^h = e^{L_T^h}$ and $G_T^{EM}=e^{L_T^{EM}}$, where
\[
L_T^h := \int_0^T g(X_t^h)\,dY_t - \tfrac{1}{2}\int_0^T |g(X_t^h)|^2\,dt, \qquad L_T^{EM} := \int_0^T g(X^{EM}_{\eta_h(t)})\,dY_t - \tfrac{1}{2}\int_0^T |g(X^{EM}_{\eta_h(t)})|^2\,dt.
\]
The elementary inequality $|e^b - e^a| = |\int_a^b e^x \, dx| \le |b-a| \left( e^a + e^b \right)$, together with Cauchy--Schwarz and \eqref{eq:second_moment}, gives
\begin{align}
\E\left|G_T^h - G_T^{EM}\right|
&\le \left( \E\left[ (G_T^h + G_T^{EM})^2 \right] \right)^{1/2} \left( \E\left[ |L_T^h - L_T^{EM}|^2 \right] \right)^{1/2} \nonumber\\
&\le 2 e^{\|g\|_\infty^2 T/2} \left( \E\left[ |L_T^h - L_T^{EM}|^2 \right] \right)^{1/2}. \label{eq:G_diff_bound}
\end{align}
It remains to show $\E |L_T^h - L_T^{EM}|^2 = O(h)$. Let $\Delta_t := X^h_t - X^{EM}_{\eta_h(t)}$ and note, by Theorem~\ref{thm:strong_convergence_mismatch} and the single-step increment bound from its proof,
\begin{align}
\int_0^T \E|\Delta_t|^2 \, dt &\le 2 \int_0^T \left( \E|X^h_t - X^{EM}_t|^2 + \E|X^{EM}_t - X^{EM}_{\eta_h(t)}|^2 \right) dt \nonumber\\
&\le 2T\left( C_2 T e^{2C_1 T} + 2K^2(h^2+h) \right) \le \kappa_2 h, \label{eq:Delta_bound}
\end{align}
where $\kappa_2 := 2T\left( 8 C_1 K^2 T e^{2C_1 T} + 4K^2 \right)$. Using $|a+b|^2 \le 2|a|^2 + 2|b|^2$, It\^o's isometry (recall $Y$ is a standard Brownian motion under $\Pref$), and the Lipschitz continuity of $g$,
\begin{align*}
\E\left| \int_0^T \left( g(X_t^h)-g(X^{EM}_{\eta_h(t)}) \right) dY_t \right|^2 &= \int_0^T \E\left| g(X_t^h)-g(X^{EM}_{\eta_h(t)}) \right|^2 dt \le L_g^2 \int_0^T \E|\Delta_t|^2\,dt \le L_g^2 \kappa_2 h,
\end{align*}
while for the bounded-variation part, using $\big| |g(x)|^2 - |g(y)|^2 \big| \le 2\|g\|_\infty L_g |x-y|$ and Cauchy--Schwarz in time,
\begin{align*}
\E\left| \int_0^T \left( |g(X_t^h)|^2 - |g(X^{EM}_{\eta_h(t)})|^2 \right) dt \right|^2
\le T \int_0^T 4 \|g\|_\infty^2 L_g^2\, \E |\Delta_t|^2 \, dt \le 4 T \|g\|_\infty^2 L_g^2 \kappa_2 h.
\end{align*}
Combining the two bounds,
\[
\E|L_T^h - L_T^{EM}|^2 \le 2 L_g^2 \kappa_2 h + 2 T \|g\|_\infty^2 L_g^2 \kappa_2 h =: \kappa_3 h,
\]
and by \eqref{eq:G_diff_bound}, $(II) \le 2 M e^{\|g\|_\infty^2 T/2} \sqrt{\kappa_3 h}$.

Collecting the bounds on $(I)$ and $(II)$, which hold uniformly over $\gamma \in \mathfrak{U}_{\mathrm{WS}}$, completes the proof.
\end{proof}

To formulate a computable finite POMDP, the controller cannot have access to the continuous path $Y_t$; instead, the policy must depend strictly on the discrete observation increments $\bar{Y}_{k+1}^h = Y_{(k+1)h} - Y_{kh}$. We now prove that restricting the policy space to these discrete samples incurs no asymptotic loss, which is the main result of this section.

\begin{theorem}\label{thm:main_approximation}
Let $\gamma^{EM}_h$ be the optimal policy derived from the fully discrete Euler--Maruyama POMDP (which uses only the discrete observation samples); such an optimal policy exists by the weak Feller property established in Section~\ref{sec:cont_reg} and the compactness of $\U$. Under the conditions of Lemma~\ref{lemma:intermediate_convergence}, $\gamma^{EM}_h$ is asymptotically near optimal for the exact sampled diffusion model:
\begin{equation}\label{eq:main_sampled}
    J_h(\gamma^{EM}_h) \leq \inf_{\gamma \in \mathfrak{U}_{\mathrm{WS}}} J_h(\gamma) + \epsilon(h),
\end{equation}
where $\epsilon(h) \to 0$ as $h \to 0$; consequently, it is also asymptotically near optimal for the original continuous-time partially observed diffusion \eqref{cont_model}--\eqref{obs_model}:
\begin{equation}\label{eq:direct_ct_optimality}
J_T(x, \gamma^{EM}_h) \le J_T^*(x) + \tilde{\epsilon}(h), \qquad \tilde{\epsilon}(h) \to 0 \text{ as } h \to 0,
\end{equation}
where $J_T^*(x) = \inf_{\gamma \in \mathfrak{U}_{\mathrm{WS}}} J_T(x,\gamma)$. More generally, any near optimal policy for the Euler--Maruyama model is near optimal for the original continuous-time partially observed diffusion as $h \to 0$.
\end{theorem}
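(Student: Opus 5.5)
The plan is a three-term sandwich argument. I will compare the exact sampled model $J_h$, the intermediate continuous-observation Euler--Maruyama model $J_h^{EM,cont}$, and the fully discrete Euler--Maruyama POMDP $J_h^{EM}$. The key observation is that every policy using only the discrete samples $\{Y_{ih}\}_{i\le k}$ is also a wide-sense admissible policy for the intermediate model under $\Pref$. Moreover, for such sample-based policies the two Euler--Maruyama costs coincide. Indeed, in \eqref{eq:em_continuous_obs} the drift is $g(X^{EM}_{\eta_h(t)})$, which is constant on each interval $[kh,(k+1)h)$. Hence the sampled increments $\tilde Y_{(k+1)h}-\tilde Y_{kh}$ have exactly the law prescribed by \eqref{eq:euler_obs}, conditionally on the past. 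So the joint law of (state samples, observation samples, actions) is identical, and
\[
J_h^{EM}(\gamma)=J_h^{EM,cont}(\gamma)
\]
for every $\gamma$ in the class $\Gamma_h^{s}$ of sample-based policies. Lemma~\ref{lemma:intermediate_convergence} then gives $|J_h(\gamma)-J_h^{EM}(\gamma)|\le C^*\sqrt h$ uniformly over $\gamma\in\Gamma_h^{s}$.

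Next I will show that restricting to samples costs nothing asymptotically for the sampled model. That is, I need
\[
\inf_{\gamma\in\Gamma_h^{s}}J_h(\gamma)\le \inf_{\gamma\in\mathfrak U_{WS}}J_h(\gamma)+\delta(h),\qquad \delta(h)\to0.
\]
This is where the Lusin/Dugundji argument of Section~\ref{sec:cont_existence_disc} and \cite[Theorem 5.3]{pradhanyuksel2023DTApprx} enter. First, Theorem~\ref{thm:PY_existence}(iii) and Theorem~\ref{PY_pathvalued} give that $\inf_{\mathfrak U_{WS}}J_h$ converges to $J_T^*(x)$. Then, for fixed $\epsilon$, I take an $\epsilon$-optimal continuous-time policy, make it continuous on path space via Lusin and Dugundji with cost loss $2Me^{\|g\|^2_\infty T/2}\sqrt\epsilon$, and evaluate it on the piecewise-constant interpolation of the samples. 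The continuity of the cost (Theorem~\ref{thm:PY_existence}(i)) makes the resulting error vanish as $h\to0$. A diagonal choice of $\epsilon=\epsilon(h)\to0$ then yields a sample-based policy $\gamma_h^s$ with $J_h(\gamma_h^s)\le J_T^*(x)+o(1)$.

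With these two ingredients the chain is:
\[
J_h(\gamma_h^{EM})\le J_h^{EM}(\gamma_h^{EM})+C^*\sqrt h\le J_h^{EM}(\gamma_h^s)+C^*\sqrt h\le J_h(\gamma_h^s)+2C^*\sqrt h .
\]
The middle inequality is the optimality of $\gamma^{EM}_h$ within $\Gamma^s_h$; existence of the optimizer is taken as given from the weak Feller property. Combining with $J_h(\gamma_h^s)\le J_T^*(x)+o(1)$ and $\inf_{\mathfrak U_{WS}}J_h\to J_T^*(x)$ gives \eqref{eq:main_sampled}. The same chain works for any $\delta$-optimal policy of the Euler--Maruyama model, with an extra $\delta$ term.

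For \eqref{eq:direct_ct_optimality}, I still need $|J_T(x,\gamma)-J_h(\gamma)|\to0$ uniformly over piecewise-constant policies, where $J_h$ uses $\hat c_h$. I will obtain this from Theorem~\ref{PY_pathvalued}, since the sampled model with stage cost $\hat c_h$ is exactly the true diffusion under the piecewise-constant control. By the tower property, $\E[\hat c_h(X_{kh},U_{kh})]=\E\int_{kh}^{(k+1)h}c(X_s,U_{kh})\,ds$, so in fact $J_h(\gamma)=J_T(x,\gamma)$ for such policies once $N_h=T/h$. Therefore \eqref{eq:main_sampled} transfers directly. The main obstacle is the second step: making the Lusin approximation uniform enough to pair with $h\to0$, and verifying rigorously that the intermediate and fully discrete Euler--Maruyama costs agree for sample-based policies under the change of measure.
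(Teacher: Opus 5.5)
Your argument is correct and has the same skeleton as the paper's proof: Lemma~\ref{lemma:intermediate_convergence} is applied on both sides of the optimality of $\gamma^{EM}_h$, and then $J_h=J_T$ for piecewise-constant policies. The difference is which model you carry out the \emph{sampling costs nothing} step in.

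The paper proves $\inf_{\Gamma_{disc}} J^{EM,cont}_h \le \inf_{\mathfrak{U}_{\mathrm{WS}}} J^{EM,cont}_h + \epsilon_2(h)$. That is, it reruns the Lusin--Dugundji and interpolation argument inside the intermediate Euler--Maruyama model, then compares $\gamma^{EM}_h$ with an arbitrary $\gamma\in\mathfrak{U}_{\mathrm{WS}}$. This needs the Lemma's uniformity over all path-dependent wide-sense policies. It yields \eqref{eq:main_sampled} directly, without passing through $J_T^*(x)$. It also needs continuity of the intermediate cost in the policy, which the paper gets by combining Theorem~\ref{thm:PY_existence}(i) with the Lemma.

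You instead compare against a single near-optimal sample-based policy $\gamma^s_h$ for the exact sampled model. So you invoke the Lemma only on sample-based policies. There you correctly justify $J^{EM}_h=J^{EM,cont}_h$: the drift of $\tilde Y$ is frozen on each interval, so the sampled increments coincide with \eqref{eq:euler_obs}. The paper only asserts this identity. Your route is more modular, since the only new ingredient is the Euler--Maruyama error bound. You then obtain \eqref{eq:main_sampled} by passing through $J_T^*(x)\le \inf_{\mathfrak{U}_{\mathrm{WS}}} J_h$.

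Your second step, $\inf_{\Gamma^s_h} J_h \to J_T^*(x)$, is exactly part~1 of the quoted \cite[Theorem 5.3]{pradhanyuksel2023DTApprx}. You can cite it rather than redo the diagonal Lusin argument. Redoing it would also require aligning the switching grid of the $\epsilon$-optimal piecewise-constant policy from Theorem~\ref{thm:PY_existence}(iii) with the $h$-grid, so that the resulting policy actually lies in $\Gamma^s_h$.
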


\begin{proof}
Let $\Gamma_{path}$ denote the space of all admissible path-dependent (relaxed) policies, described by measurable stage maps $\gamma_k : C([0,kh]; \R^M) \to \cP(\U)$ as in Section~\ref{sec:cont_existence_disc}, and let $\Gamma_{disc} \subset \Gamma_{path}$ denote the restricted space of policies that depend solely on the discrete sample sequence $\{Y_{kh}\}_{k=0}^{N_h}$.

Under the reference measure $\Pref$, the observation $Y_t$ is a standard Brownian motion. For any measurable policy $\gamma \in \Gamma_{path}$ and any $\delta > 0$, Lusin's theorem and the Dugundji extension, applied to each stage map exactly as in Section~\ref{sec:cont_existence_disc}, provide a policy $\gamma_C$ with continuous stage maps and an event $\Omega_\delta$ with $\Pref(\Omega_\delta) > 1 - \delta$ such that $\gamma_C = \gamma$ on $\Omega_\delta$. Because the two policies differ only on the complement $\Omega_\delta^c$, and the total cost is bounded by $M$, the performance difference under the intermediate model is strictly bounded, exactly as in Section~\ref{sec:cont_existence_disc}:
\begin{equation}
    |J^{EM, cont}_h(\gamma) - J^{EM, cont}_h(\gamma_C)| \leq M\, \E_{\Pref}\!\left[ \left( G_T^{EM,\gamma} + G_T^{EM,\gamma_C} \right) \mathbf{1}_{\Omega_\delta^c} \right] \le 2 M e^{\|g\|_\infty^2 T/2} \sqrt{\delta},
\end{equation}
where $G_T^{EM,\gamma}$ and $G_T^{EM,\gamma_C}$ denote the Radon--Nikodym derivatives under the two policies, and the last inequality follows from the Cauchy--Schwarz inequality and the second moment bound \eqref{eq:second_moment}. Hence, as $\delta \to 0$, any measurable policy can be approximated arbitrarily well by a policy with continuous stage maps.

Next, we evaluate this continuous policy $\gamma_C$ on the discrete measurements. Define the piecewise-constant interpolation of the sampled path as $\hat{Y}^h_t = Y_{kh}$ for $t \in [kh, (k+1)h)$. Because the sample paths of Brownian motion are almost surely uniformly continuous on $[0,T]$, the interpolated path converges uniformly to the true path: $\sup_{0 \leq t \leq T} |\hat{Y}^h_t - Y_t| \to 0$ almost surely as $h \to 0$. Because the stage maps of $\gamma_C$ are continuous with respect to the uniform topology on the path space, evaluating them on the discretized path yields $\gamma_C(\hat{Y}^h) \to \gamma_C(Y)$ almost surely, and $\gamma_C(\hat{Y}^h)$ depends entirely on the finite set of samples $\{Y_{kh}\}_{k=0}^{N_h}$; that is, $\gamma_C(\hat{Y}^h) \in \Gamma_{disc}$. As in Section~\ref{sec:cont_existence_disc}, almost sure convergence of the actions implies weak convergence of the joint laws in $\mathfrak{U}_{\mathrm{WS}}$, and the corresponding costs converge by the continuity of the cost in the policy (Theorem~\ref{thm:PY_existence}(i)) together with Lemma~\ref{lemma:intermediate_convergence}.

Therefore, the cost infimum over the restricted discrete policy space is asymptotically equivalent to the infimum over the full policy space:
\begin{equation}\label{eq:eps2}
    \inf_{\gamma \in \Gamma_{disc}} J^{EM, cont}_h(\gamma) \leq \inf_{\gamma \in \mathfrak{U}_{\mathrm{WS}}} J^{EM, cont}_h(\gamma) + \epsilon_2(h),
\end{equation}
where $\epsilon_2(h) \to 0$ as $h \to 0$.

Now let $\epsilon_1(h) := C^* \sqrt{h}$ be the uniform bound of Lemma~\ref{lemma:intermediate_convergence}, and recall that $J^{EM,cont}_h = J^{EM}_h$ on $\Gamma_{disc}$, so that the optimality of $\gamma^{EM}_h$ for the fully discrete model gives $J^{EM,cont}_h(\gamma^{EM}_h) = \inf_{\gamma' \in \Gamma_{disc}} J^{EM,cont}_h(\gamma')$. Then, for any $\gamma \in \mathfrak{U}_{\mathrm{WS}}$:
\begin{align*}
    J_h(\gamma^{EM}_h) &\leq J^{EM, cont}_h(\gamma^{EM}_h) + \epsilon_1(h) \\
    &= \inf_{\gamma' \in \Gamma_{disc}} J^{EM, cont}_h(\gamma') + \epsilon_1(h) \\
    &\leq J^{EM, cont}_h(\gamma) + \epsilon_1(h) + \epsilon_2(h) \\
    &\leq J_h(\gamma) + 2\epsilon_1(h) + \epsilon_2(h).
\end{align*}
Defining $\epsilon(h) := 2\epsilon_1(h) + \epsilon_2(h)$ and taking the infimum over all policies $\gamma \in \mathfrak{U}_{\mathrm{WS}}$ on the right-hand side yields \eqref{eq:main_sampled}. Thus, optimizing the computable discrete-time Euler--Maruyama POMDP directly yields a policy that performs within $\epsilon(h)$ of the best possible policy on the exact sampled continuous-time diffusion.

Finally, since the control induced by $\gamma^{EM}_h$ is piecewise constant on $h$-interval, we have $J_h(\gamma^{EM}_h) = J_T(x, \gamma^{EM}_h)$ (Section~\ref{sec:cont_existence_disc}), while $\inf_{\gamma} J_h(\gamma) \to J_T^*(x)$ as $h \to 0$ by Theorem~\ref{PY_pathvalued}(i); hence \eqref{eq:direct_ct_optimality} holds with $\tilde{\epsilon}(h) := \epsilon(h) + \left( \inf_{\gamma} J_h(\gamma) - J_T^*(x) \right)$. For the last claim of the theorem, if $\gamma_h$ is instead an $\varepsilon'$-optimal policy for the Euler--Maruyama model, the second line of the chain above holds with an additional $\varepsilon'$, and the same argument gives \eqref{eq:main_sampled} and \eqref{eq:direct_ct_optimality} with $\epsilon(h) + \varepsilon'$ and $\tilde{\epsilon}(h) + \varepsilon'$, respectively.
\end{proof}

So far we have considered the finite-horizon criterion. The same argument also applies to the infinite-horizon discounted cost criterion, which we now describe. For the original continuous-time controlled diffusion, the discounted cost under a policy $\gamma$ is defined as
\[
J_\alpha(x, \gamma) := \E_x^\gamma \left[ \int_0^\infty e^{-\alpha t} c(X_t, U_t) \, dt \right],
\]
for some discount rate $\alpha > 0$, with optimal cost $J_\alpha^*(x) := \inf_{\gamma \in \mathfrak{U}_{\mathrm{WS}}} J_\alpha(x, \gamma)$. By applying a piecewise-constant policy over intervals of length $h$, we arrive at the equivalent discrete-time discounted cost
\[
J_{\alpha, h}(x, \gamma) := \E_x^\gamma \left[ \sum_{k=0}^{\infty} \beta^k\, \hat{c}^{\,\alpha}_h(X_{kh}^h, U_{kh}) \right], \qquad \beta := e^{-\alpha h} \in (0,1),
\]
where the discrete-time stage cost is
\[
\hat{c}^{\,\alpha}_h(x, u) := \E \left[ \int_0^h e^{-\alpha t} c(X_t, u) \, dt \,\bigg|\, X_0 = x \right].
\]
The cost of the fully discrete Euler--Maruyama POMDP under this criterion, denoted $J^{EM}_{\alpha,h}(\gamma)$, is defined analogously, with $X^{EM}$ in place of $X^h$.

\begin{theorem}\label{thm:main_approximation_discounted}
Suppose the conditions of Lemma~\ref{lemma:intermediate_convergence} hold. For $h > 0$, let $\gamma^{EM}_{\alpha, h}$ be an optimal policy of the fully discrete Euler--Maruyama POMDP under the discounted cost criterion. Then $\gamma^{EM}_{\alpha, h}$ is asymptotically near optimal for the original continuous-time problem:
\[
J_\alpha(x, \gamma^{EM}_{\alpha,h}) \le J_\alpha^*(x) + \epsilon(h),
\]
where $\epsilon(h) \to 0$ as $h \to 0$.
\end{theorem}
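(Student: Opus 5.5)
The plan is to reduce the discounted problem to the finite-horizon results by truncating the horizon, exploiting the fact that $c$ is bounded and the discount factor makes the tail uniformly small. Fix $\varepsilon>0$ and choose $T$ with $\|c\|_\infty e^{-\alpha T}/\alpha<\varepsilon$. Then for every policy $\gamma\in\mathfrak{U}_{\mathrm{WS}}$ (continuous-time or piecewise constant) the contribution of $[T,\infty)$ to $J_\alpha(x,\gamma)$ is at most $\varepsilon$. Likewise, the contribution of stages $k\ge N_h=T/h$ to $J_{\alpha,h}$ or $J^{EM}_{\alpha,h}$ is at most $\|c\|_\infty\sum_{k\ge N_h}\beta^k h\le \|c\|_\infty e^{-\alpha T}h/(1-e^{-\alpha h})$. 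This is at most $2\varepsilon$ for small $h$, uniformly in policies. So all three discounted costs are within $O(\varepsilon)$ of their $T$-truncations, uniformly over policies. The truncated objectives are finite-horizon problems with time-dependent running cost $e^{-\alpha t}c(x,u)$ and zero terminal cost; the discrete stage cost $\beta^k\hat c^{\,\alpha}_h$ is exactly the analogue of $\hat c_h$.

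The next step is to check that the finite-horizon results apply verbatim to this time-inhomogeneous cost. In the proof of Lemma~\ref{lemma:intermediate_convergence}, the cost enters only through boundedness (by $M=T\|c\|_\infty$) and through the Lipschitz estimate $|\hat c^{\,\alpha}_h(x,u)-\hat c^{\,\alpha}_h(y,u)|\le\kappa_0h|x-y|$. Both survive multiplication by $e^{-\alpha t}\le 1$. Hence
\[
\sup_{\gamma}\bigl|J^{T}_{\alpha,h}(\gamma)-J^{EM,cont,T}_{\alpha,h}(\gamma)\bigr|\le C^*\sqrt h .
\]
Theorem~\ref{thm:PY_existence} and Theorem~\ref{PY_pathvalued} are stated for bounded continuous costs, and their arguments are likewise insensitive to a continuous bounded time weight. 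The Lusin/Dugundji argument of Theorem~\ref{thm:main_approximation} gives the analogue of \eqref{eq:eps2} for the truncated discounted cost. With these in hand, I would run the chain of inequalities of Theorem~\ref{thm:main_approximation} on the full discounted costs, paying the truncation error $O(\varepsilon)$ each time a full cost is swapped for its truncation.

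Concretely, let $\gamma^{EM}_{\alpha,h}$ be optimal for $J^{EM}_{\alpha,h}$ over discrete-sample policies, and let $\gamma\in\mathfrak{U}_{\mathrm{WS}}$ be arbitrary. The chain is
\[
J_{\alpha,h}(\gamma^{EM}_{\alpha,h})\le J^{EM}_{\alpha,h}(\gamma^{EM}_{\alpha,h})+\epsilon_1+O(\varepsilon)\le \inf_{\Gamma_{disc}}J^{EM}_{\alpha,h}+\dots\le J_{\alpha,h}(\gamma)+2\epsilon_1+\epsilon_2+O(\varepsilon).
\]
Taking the infimum over $\gamma$ and using $J_{\alpha,h}(\gamma^{EM}_{\alpha,h})=J_\alpha(x,\gamma^{EM}_{\alpha,h})$, which holds since the induced control is piecewise constant, it remains to show $\inf_\gamma J_{\alpha,h}(\gamma)\to J^*_\alpha(x)$. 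This follows from the truncated version of Theorem~\ref{PY_pathvalued}(i) plus the uniform tail bounds. Letting $h\to0$ and then $\varepsilon\to0$ gives $\limsup_{h\to0}\bigl(J_\alpha(x,\gamma^{EM}_{\alpha,h})-J^*_\alpha(x)\bigr)\le0$, which is the claim with a suitable $\epsilon(h)$.

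The main obstacle is the intermediate step: the optimal discounted policy $\gamma^{EM}_{\alpha,h}$ is a stationary infinite-horizon policy, not an optimizer of the truncated problem. I would handle this by never using optimality on the truncated problem. Optimality is invoked only for the full discounted EM cost, and the truncation bounds are applied to fixed policies on both sides, since they are uniform over all policies. A secondary technical point is checking that Lemma~\ref{lemma:intermediate_convergence}'s constants, which grow like $e^{cT}$, remain finite for the fixed $T=T(\varepsilon)$. They do, because $T$ is chosen before $h\to0$.
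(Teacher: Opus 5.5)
Your proof is correct. It runs the same chain of inequalities as Theorem~\ref{thm:main_approximation}, and the paper gives no separate proof of the discounted case, only the remark that ``the same argument also applies.'' What you add is the horizon truncation, and that is exactly what turns the remark into a proof.

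A literal rerun of the finite-horizon argument on $[0,\infty)$ would not work, for two reasons:
\begin{enumerate}
\item The Girsanov density $G_T$ and the bound $\E_{\Pref}[G_T^2]\le e^{\|g\|_\infty^2T}$ are only available for finite $T$. In general there is no Radon--Nikodym density on the infinite horizon.
\item The constant $C^*$ of Lemma~\ref{lemma:intermediate_convergence} contains $e^{2C_1T}$ with $C_1=2(T+4)K^2$, so it grows like $e^{cT^2}$. Summing stage-wise errors against $\beta^k=e^{-\alpha kh}$ would therefore diverge.
\end{enumerate}
Fixing $T=T(\varepsilon)$ first, using policy-uniform tail bounds, and letting $h\to0$ before $\varepsilon\to0$ avoids both problems.

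Your observation that optimality of $\gamma^{EM}_{\alpha,h}$ is invoked only for the full discounted EM problem, never for a truncated one, correctly handles the fact that the optimal policy is stationary rather than a truncated-horizon optimizer.

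Two minor points. First, the sharper bound $\hat c^{\,\alpha}_h\le\|c\|_\infty(1-e^{-\alpha h})/\alpha$ makes the discrete tail at most $\|c\|_\infty e^{-\alpha T}/\alpha<\varepsilon$ for every $h$, not just for small $h$. Second, you should take $T$ to be a multiple of $h$, or work with $\lfloor T/h\rfloor$.

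You assert rather than prove that Theorem~\ref{thm:PY_existence}(i) and Theorem~\ref{PY_pathvalued}(i) extend to the time-weighted running cost $e^{-\alpha t}c(x,u)$. This is at the same level of rigor as the paper's own appeal to those results.
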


\begin{remark}
To the best of our knowledge, for the average cost criterion, obtaining near optimal policies via time discretization is still an open problem; for this reason, we do not generalize our results to the average cost criterion.
\end{remark}

\section{Regularity Properties of the POMDP obtained via Euler-Maruyama Approximation}\label{asymptotic_continous}

Now that we have a discrete-time POMDP via the Euler--Maruyama approximation \eqref{eq:euler_state}--\eqref{eq:euler_obs}, we obtain regularity properties and their implications on finite model approximations.  For this section we prove that filter kernel for the new approximate model is weak Feller, so we can quantize the belief state to find the near optimal policy.

\subsection{Discrete Time Partially Observable Markov Decision Process}
A discrete-time Partially Observable Markov Decision Process (POMDP) is characterized by the tuple $(\mathbb{X}, \mathbb{U}, \mathbb{Y}, \mathcal{T}, \mathcal{Q}, c)$. The state process takes values in a Polish metric space $(\mathbb{X}, d)$, and the measurement sequence $\{Y_k\}$ takes values in a standard Borel space $\mathbb{Y}$. The initial state $X_0$ distributed according to a probability measure $\mu \in \mathcal{P}(\mathbb{X})$. We denote by $\mathbb{B}(\mathbb{X})$ the Borel $\sigma$-field on $\mathbb{X}$, and by $\mathcal{P}(\mathbb{X})$ the space of probability measures on $(\mathbb{X}, \mathbb{B}(\mathbb{X}))$, equipped with the topology of weak convergence. The set of continuous and bounded functions on $\mathbb{X}$ is denoted by $\mathbb{C}(\mathbb{X})$.  The signals and observations are governed by the state transition kernel $\mathcal{T}$ and the measurement kernel $\mathcal{Q}$, respectively. For Borel sets $B_x \subset \mathbb{X}$ and $B_y \subset \mathbb{Y}$:
\begin{align}
\mathcal{T}(B_x \mid x, u) &:= P(X_{t+1} \in B_x \mid X_t=x, U_t=u), \\
\mathcal{Q}(B_y \mid x, u) &:= P(Y_{t} \in B_y \mid X_{t}=x).
\end{align}

At each time step $t$, the decision maker selects a control action $U_t$ from a Polish space $\mathbb{U}$, incurring a stage cost $c(X_t, U_t)$. The decision maker only has access to the measurement sequence and past control actions. An admissible policy $\gamma$ is a sequence of control functions $\{\gamma_t\}_{t \in \mathbb{Z}_{+}}$, where each $\gamma_t$ is measurable with respect to the $\sigma$-algebra generated by the available information:
\begin{equation}
I_t = \{Y_{[0, t]}, U_{[0, t-1]}\}, \quad \text{with} \quad I_0 = \{Y_0\},
\end{equation}
so that $U_t = \gamma_k(I_t)$. We denote by $\Gamma$ the set of all such admissible policies.
Given the initial distribution $\mu$ and a policy $\gamma \in \Gamma$, the Ionescu-Tulcea theorem \cite{HernandezLermaMCP} guarantees the existence of a unique probability measure, denoted $P_\mu^\gamma$, on the infinite product space $(\Omega, \mathcal{F})$ consistent with the finite-dimensional distributions.

We are interested in minimizing the finite-horizon total cost criterion over $T$ stages. Given a continuous and bounded stage cost $c: \mathbb{X} \times \mathbb{U} \to \mathbb{R}_{+}$ and a continuous terminal cost $c_T: \mathbb{X} \to \mathbb{R}_{+}$, the expected cost is:
\begin{equation}
J_T(c, \mu, \mathcal{T}, \mathcal{Q}, \gamma) := \mathbb{E}_\mu^\gamma \left[ \sum_{k=0}^{T-1} c(X_k, U_k) + c_T(X_T) \right].
\end{equation}
The optimal finite-horizon cost is defined as $J_T^*(c, \mu, \mathcal{T}, \mathcal{Q}) := \inf_{\gamma \in \Gamma} J_T(c, \mu, \mathcal{T}, \mathcal{Q}, \gamma)$.

\subsection{Belief-MDP Reduction}

A POMDP can be reduced to a completely observable MDP where the state is the posterior distribution of the system, known as the belief state. The belief state at time $t$ is defined as:
\begin{equation}
\pi_t(\cdot) := P \{X_t \in \cdot \mid Y_0, \ldots, Y_t, U_0, \ldots, U_{t-1}\} \in \mathcal{P}(\mathbb{X}).
\end{equation}

This equivalent model is the belief-MDP. It has the state space $\mathcal{P}(\mathbb{X})$ and action space $\mathbb{U}$. Since $\mathbb{X}$ is a Borel space, the space $\mathcal{P}(\mathbb{X})$ equipped with the topology of weak convergence is metrizable by the Prokhorov metric, which makes $\mathcal{P}(\mathbb{X})$ itself a Borel space. 

The recursive update of the belief state is governed by a measurable non-linear filtering map $F(\pi, u, y): \mathcal{P}(\mathbb{X}) \times \mathbb{U} \times \mathbb{Y} \to \mathcal{P}(\mathbb{X})$, defined via Bayes' rule:
\begin{equation}
F(\pi, u, y)(dx') := P \{X_{t+1} \in dx' \mid \pi_t=\pi, U_t=u, Y_{t+1}=y\}.
\end{equation}

Let $P(\cdot \mid \pi, u) := P\{Y_{k+1} \in \cdot \mid \pi_k=\pi, U_k=u\}$ denote the stochastic kernel on $\mathbb{Y}$ given $\mathcal{P}(\mathbb{X}) \times \mathbb{U}$. The transition probability $\eta$ of the belief-MDP is constructed by:
\begin{equation}
\eta(\cdot \mid \pi, u) = \int_{\mathbb{Y}} \mathbf{1}_{\{F(\pi, u, y) \in \cdot\}} P(dy \mid \pi, u).
\end{equation}

The one-stage cost function $\tilde{c}$ of the belief-MDP is calculated by taking expectation of cost over the current belief state:
\begin{equation}
\tilde{c}(\pi, u) := \int_{\mathbb{X}} c(x, u) \pi(dx).
\end{equation}
Because the original cost function $c(x, u)$ is continuous and bounded on $\mathbb{X} \times \mathbb{U}$, by the dominated convergence theorem $\tilde{c}(\pi, u)$ is also continuous and bounded.

The belief-MDP is defined by the tuple $(\mathcal{P}(\mathbb{X}), \mathbb{U}, \eta, \tilde{c})$. For finite-horizon problems, it is a well-established result that the belief $\pi_t$ is a sufficient statistic, meaning that without losing optimality the policies can be restricted to Markov policies on the belief state  (see \cite{Yus76,Rhe74,Blackwell2}).

\subsection{Weak Feller Continuity of the Belief-MDP}

For quantization of the belief space and to ensure the continuity of the finite-horizon value function, we must establish the weak Feller property of the filter kernel $\eta$ defined above. Specifically, $\eta$ is weak Feller if for every $f \in \mathbb{C}(\mathcal{P}(\mathbb{X}))$, the mapping:
\begin{equation}
(\pi, u) \mapsto \int_{\mathcal{P}(\mathbb{X})} f(\pi') \eta(d\pi' \mid \pi, u)
\end{equation}
is continuous in $(\pi, u)$. 

We now present two sets of conditions under which the filter kernel satisfies the weak Feller property.
\begin{assumption}\label{assum:weak_feller_1}
\leavevmode
\begin{enumerate}
    \item[(i)] The transition probability $\mathcal{T}(\cdot \mid x, u)$ is weakly continuous in $(x, u)$. That is, for any sequence $(x_n, u_n) \to (x, u)$, $\mathcal{T}(\cdot \mid x_n, u_n) \to \mathcal{T}(\cdot \mid x, u)$ weakly.
    \item[(ii)] The observation channel $\mathcal{Q}(\cdot \mid x, u)$ is continuous in total variation. That is, for any sequence $(x_n, u_n) \to (x, u)$, $\mathcal{Q}(\cdot \mid x_n, u_n) \to \mathcal{Q}(\cdot \mid x, u)$ in total variation.
\end{enumerate}
\end{assumption}

\begin{remark}
    Note that this assumption is for a more general setup in which the observation channel may also depend on the control action.
\end{remark}

\begin{assumption}\label{assum:weak_feller_2}
\leavevmode
\begin{enumerate}
    \item[(i)] The transition probability $\mathcal{T}(\cdot \mid x, u)$ is continuous in total variation in $(x, u)$. That is, for any $(x_n, u_n) \to (x, u)$, $\mathcal{T}(\cdot \mid x_n, u_n) \to \mathcal{T}(\cdot \mid x, u)$ in total variation.
    \item[(ii)] The observation channel $\mathcal{Q}(\cdot \mid x)$ is independent of the control variable.
\end{enumerate}
\end{assumption}

\begin{theorem}\label{thm:weakfeller}
The transition probability $\eta(\cdot \mid \pi, u)$ of the filter process satisfies the weak Feller property under either:
(i)\cite{FeKaZg14} Assumption \ref{assum:weak_feller_1}; or (ii)\cite{KSYWeakFellerSysCont} Assumption \ref{assum:weak_feller_2}.
\end{theorem}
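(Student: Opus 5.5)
The plan is to show that, for every $f \in \mathbb{C}(\mathcal{P}(\mathbb{X}))$ and every sequence $(\pi_n,u_n)\to(\pi,u)$,
\[
\int_{\mathbb{Y}} f(F(\pi_n,u_n,y))\,P(dy\mid\pi_n,u_n)\to\int_{\mathbb{Y}} f(F(\pi,u,y))\,P(dy\mid\pi,u).
\]
We reduce this to two facts. \textbf{(a)} The predictive measurement law $P(\cdot\mid\pi_n,u_n)$ converges to $P(\cdot\mid\pi,u)$ in total variation. \textbf{(b)} Along a subsequence, for $P(\cdot\mid\pi,u)$-a.e.\ $y$, the filter $F(\pi_n,u_n,y)$ converges weakly to $F(\pi,u,y)$. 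Given (a) and (b), write the difference as
\[
\int f(F_n)\,d(P_n-P)+\int \bigl(f(F_n)-f(F)\bigr)\,dP .
\]
The first term is at most $\|f\|_\infty\|P_n-P\|_{TV}$. The second tends to zero by dominated convergence along the subsequence. Since every subsequence has such a further subsequence, the whole sequence converges.

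For case (i), Assumption~\ref{assum:weak_feller_1}, I follow \cite{FeKaZg14}.

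\textbf{Step 1.} Form the joint predictive measure $R_n(dx',dy)=\int \mathcal{T}(dx'\mid x,u_n)\,\mathcal{Q}(dy\mid x',u_n)\,\pi_n(dx)$ on $\mathbb{X}\times\mathbb{Y}$.

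\textbf{Step 2.} Weak continuity of $\mathcal{T}$ gives $\int\mathcal{T}(\cdot\mid x,u_n)\,\pi_n(dx)\to\int\mathcal{T}(\cdot\mid x,u)\,\pi(dx)$ weakly. This uses the standard lemma that $\int g_n\,d\mu_n\to\int g\,d\mu$ when $g_n\to g$ continuously convergently and $\mu_n\to\mu$ weakly.

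\textbf{Step 3.} Total-variation continuity of $\mathcal{Q}$ upgrades Step 2 to setwise convergence in $y$ of the $y$-marginal. Indeed, $x'\mapsto\mathcal{Q}(B\mid x',u)$ is continuous uniformly over Borel sets $B$, so the generalized dominated convergence / Lebesgue lemma gives $\sup_B|P_n(B)-P(B)|\to0$. This is (a).

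\textbf{Step 4.} Test (b) against $\phi\in \mathbb{C}(\mathbb{X})$. The unnormalized quantity $\int\phi(x')\,\mathcal{Q}(B\mid x',u_n)\,(\pi_n\mathcal{T})(dx')$ converges to its limit uniformly in $B$, by the same argument. Hence the measures $\phi$-weighted posteriors times $P_n$ converge in total variation.

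\textbf{Step 5.} From Step 4, the densities $\int\phi\,dF(\pi_n,u_n,y)$ converge to $\int\phi\,dF(\pi,u,y)$ in $L^1(P)$. Do this for a countable convergence-determining family $\{\phi_j\}$ and take a diagonal subsequence to obtain a.e.\ convergence. This gives (b).

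For case (ii), Assumption~\ref{assum:weak_feller_2}, following \cite{KSYWeakFellerSysCont}, the roles are swapped. Total-variation continuity of $\mathcal{T}$ in $(x,u)$, together with weak convergence $\pi_n\to\pi$, yields total-variation convergence of the predictor $\pi_n\mathcal{T}(\cdot\mid u_n)$. This again uses uniform-in-set continuity and the weak convergence lemma. Since $\mathcal{Q}(\cdot\mid x)$ does not depend on $u$, the joint measures $R_n\to R$ converge in total variation directly. Then (a) is immediate, and the $L^1$ convergence of the posterior integrals in Steps 4–5 follows by the same disintegration argument.

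I expect the main obstacle to be Steps 4–5: passing from setwise or total-variation convergence of joint measures to a.e.-$y$ weak convergence of the conditional measures. The subtlety is that the disintegrations are only defined $P$-a.e. Also, in Step 3 the measurement kernel is continuous in $x'$, but $\pi_n\mathcal{T}$ converges only weakly, so one needs the uniform version over $B$ of the continuous-convergence lemma. I would first try to prove $\sup_B\bigl|\int \phi(x')\,\mathcal{Q}(B\mid x',u_n)\,d\mu_n-\int\phi(x')\,\mathcal{Q}(B\mid x',u)\,d\mu\bigr|\to0$ via tightness of $\{\mu_n\}$ and equicontinuity on compacts. Then I would conclude with the $L^1$-to-a.e.\ subsequence argument described above.
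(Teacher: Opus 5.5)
Your argument is correct. The paper gives no proof of its own; it cites \cite{FeKaZg14} for (i) and \cite{KSYWeakFellerSysCont} for (ii), so the comparison is with those references. In both cases what you wrote is essentially the argument of \cite{KSYWeakFellerSysCont}, which there also serves as an alternative proof of (i). It is not the route of \cite{FeKaZg14} that you say you follow for (i). Feinberg, Kasyanov and Zgurovsky use a general criterion that needs only \emph{setwise} continuity of $P(\cdot\mid\pi,u)$ plus subsequential a.e.\ weak convergence of a version of the filter. They verify the latter with sets rather than test functions. For $\mathcal{O}$ in a countable base of open sets closed under finite intersections, they show that $R(\mathcal{O}\times B\mid\pi,u)$ is lower semicontinuous in $(\pi,u)$ uniformly in $B$. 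They then extract subsequential a.e.\ liminf inequalities for $F(\pi_n,u_n,y)(\mathcal{O})$ from Radon--Nikodym derivatives and conclude by the portmanteau theorem. Your route uses continuous test functions, uniform convergence over the equicontinuous family $\{\phi(\cdot)\mathcal{Q}(B\mid\cdot,\cdot)\}_B$, $L^1(P)$ convergence and a countable convergence-determining class. It is shorter, and it gives total variation rather than setwise convergence of $P(\cdot\mid\pi_n,u_n)$, so the last step is plain dominated convergence; it also transfers verbatim to (ii). The FKZ criterion is more general, since it isolates the two properties actually used and tolerates a merely setwise continuous measurement predictive law.

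The two steps you leave open each close in a line.

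\textbf{Steps 3--4.} The fixed-$B$ continuous-convergence lemma does not give the uniformity in $B$ you need, but Skorokhod's representation does. Take $\xi_n\to\xi$ a.s.\ with laws $\int\mathcal{T}(\cdot\mid x,u_n)\pi_n(dx)$ and $\int\mathcal{T}(\cdot\mid x,u)\pi(dx)$, which is available by Step 2. Then
\[
\sup_B\bigl|\phi(\xi_n)\mathcal{Q}(B\mid\xi_n,u_n)-\phi(\xi)\mathcal{Q}(B\mid\xi,u)\bigr|\le|\phi(\xi_n)-\phi(\xi)|+\|\phi\|_\infty\|\mathcal{Q}(\cdot\mid\xi_n,u_n)-\mathcal{Q}(\cdot\mid\xi,u)\|_{TV}\to0
\]
almost surely, and bounded convergence gives the uniform limit. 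The same trick with $\mathcal{T}$ and laws $\pi_n,\pi$ gives the total variation convergence of the predictor in (ii).

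\textbf{Step 5.} Set $h_n:=\int\phi\,dF(\pi_n,u_n,\cdot)$, $h:=\int\phi\,dF(\pi,u,\cdot)$, $P_n:=P(\cdot\mid\pi_n,u_n)$, $P:=P(\cdot\mid\pi,u)$, $\nu_n:=h_nP_n$ and $\nu:=hP$, and let $|\cdot|$ denote the variation of a signed measure. Then
\[
\int|h_n-h|\,dP\le\int|h_n-h|\,dP_n+2\|\phi\|_\infty|P_n-P|(\Y)\le|\nu_n-\nu|(\Y)+3\|\phi\|_\infty|P_n-P|(\Y)\to0 .
\]
This holds for any bounded version of $h_n$. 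So the fact that $F(\pi_n,u_n,\cdot)$ is determined only $P_n$-a.e., the subtlety you flagged, causes no difficulty.
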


\subsection{Regularity Properties of Euler-Maruyama Model}\label{sec:cont_reg}

Now that we have derived a discrete-time POMDP via the Euler-Maruyama approximation in \eqref{eq:euler_state} and \eqref{eq:euler_obs}, in this section we verify that it satisfies Assumption \ref{assum:weak_feller_1}. In particular, we prove that the transition kernel of this approximate model is weak Feller and that its observation kernel is Lipschitz continuous in total variation. These properties permit quantization of the belief spaces to compute a near-optimal policy for the finite model.

We denote the transition kernel of the state equation \eqref{eq:euler_state} by $\cT_h$ and the observation kernel by $Q_h$; the stage cost $\hat{c}_h$ is given by \eqref{def:new_cost_fn}. Throughout this subsection we write $X^{EM}_k := X^{EM}_{kh}$ and $U_k := U_{kh}$ for brevity. From the Euler--Maruyama state equation, the transition over the interval $[kh, (k+1)h]$ is
\begin{equation}
X_{k+1}^{EM} = X_k^{EM} + b(X_k^{EM}, U_k)h + \sigma(X_k^{EM})\Delta W_k,
\end{equation}
where the Brownian increment satisfies $\Delta W_k \sim \mathcal{N}(0, h I_d)$. Conditioned on the current state $x$ and action $u$, the next state is Gaussian, and the transition kernel is
\begin{equation}\label{eq:T_h_def}
\cT_h(\cdot \mid x, u) = \mathcal{N}\left(x + b(x, u)h, \, h\sigma(x)\sigma(x)^\top\right).
\end{equation}
For the observation process, from \eqref{eq:euler_obs} the measurement is the increment $\bar{Y}_{k+1}^h := Y_{(k+1)h}^{EM} - Y_{kh}^{EM}$, whose update is
\begin{equation}\label{eq:obs_increment}
\bar{Y}_{k+1}^h = g(X_k^{EM})h + \Delta B_k,
\end{equation}
where $\Delta B_k \sim \mathcal{N}(0, h I_M)$ is independent of the state; hence
\begin{equation}\label{eq:Q_h_def}
Q_h(\cdot \mid x) = \mathcal{N}\left(g(x)h, \, h I_M\right).
\end{equation}

% \begin{theorem}\label{thm:w1_continuity}
% Suppose Assumption \ref{assump:lipschitz} holds. Then, the transition kernel $\mathcal{T}_h$ is Lipschitz continuous in the initial state with respect to the $W_1$ metric:
% \[
% W_1\left(\mathcal{T}_h(\cdot \mid x, u), \mathcal{T}_h(\cdot \mid x', u)\right) \le \left(1 + K h + K \sqrt{h}\right) d(x, x')
% \]
% \end{theorem}

% \begin{proof}
% Consider two initial states $x, x' \in \mathbb{R}^d$ under the same control action $u$. 
% We couple the noise variables by using the same realization of \( \Delta W_k \):
% \begin{align*}
% X_{k+1}^h(x,u) &= x + b(x,u)h + \sigma(x)\Delta W_k \\
% X_{k+1}^h(x',u) &= x' + b(x',u)h + \sigma(x')\Delta W_k
% \end{align*}
% Subtracting the equations and taking the expectation of the norm yields:
% \[
% \mathbb{E}\left[\|X_{k+1}^h(x,u) - X_{k+1}^h(x',u)\|\right] \le d(x, x')  + h \|b(x,u) - b(x',u)\| + \mathbb{E}\left[\|(\sigma(x) - \sigma(x'))\Delta W_k\|\right]
% \]
% Using the Lipschitz property of $b$ and $\sigma$, and noting that $\mathbb{E}[\|\Delta W_k\|] \le \sqrt{hd}$:
% \[
% \mathbb{E}\left[\|X_{k+1}^h(x,u) - X_{k+1}^h(x',u)\|\right] \le d(x, x') + h K d(x, x')  + \sqrt{hd} K d(x, x') 
% \]
% By the definition of the $W_1$ metric as the infimum over all couplings, we have:

% \[
% W_1\left(\mathcal{T}_h(\cdot \mid x, u), \mathcal{T}_h(\cdot \mid x', u)\right) \le \left(1 + K h + K \sqrt{hd}\right) d(x, x') 
% \]

% \end{proof}

\begin{theorem}\label{thm:joint_weak_continuity}
Suppose Assumptions \ref{cont_bdd}, \ref{nondeg} and \ref{assump:lipschitz} hold. The transition kernel $\cT_h(\cdot \mid x, u)$ is weakly continuous in $(x, u)$; in fact, for any sequence $(x_n, u_n) \to (x, u)$, the kernel converges in the Wasserstein-1 metric:
\[
\lim_{n \to \infty} W_1\left(\cT_h(\cdot \mid x_n, u_n), \cT_h(\cdot \mid x, u)\right) = 0.
\]
\end{theorem}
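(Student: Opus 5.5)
We will prove the Wasserstein-1 convergence by a synchronous coupling, exactly as in the stability arguments used in the proof of Lemma~\ref{lemma:intermediate_convergence}; weak continuity then follows because convergence in $W_1$ implies weak convergence.

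\emph{Coupling.} Fix $(x_n,u_n)\to(x,u)$ and a single Gaussian vector $\xi\sim\mathcal{N}(0,hI_d)$. By \eqref{eq:T_h_def} we define
\[
X_n := x_n + b(x_n,u_n)h + \sigma(x_n)\xi, \qquad X := x + b(x,u)h + \sigma(x)\xi .
\]
Then $X_n\sim\cT_h(\cdot\mid x_n,u_n)$ and $X\sim\cT_h(\cdot\mid x,u)$. The joint law of $(X_n,X)$ is therefore an admissible coupling, and by the definition of $W_1$ as an infimum over couplings,
\[
W_1\bigl(\cT_h(\cdot\mid x_n,u_n),\cT_h(\cdot\mid x,u)\bigr)\le \E|X_n-X|.
\]

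\emph{Estimating the coupling cost.} By the triangle inequality,
\[
\E|X_n-X| \le |x_n-x| + h\,|b(x_n,u_n)-b(x,u)| + \E\bigl|(\sigma(x_n)-\sigma(x))\xi\bigr|.
\]
We control the three terms as follows.
\begin{itemize}
\item[(1)] The first term tends to zero trivially.
\item[(2)] For the drift term, we use the joint continuity of $b$ in $(x,u)$ from Assumption~\ref{cont_bdd}, which gives $|b(x_n,u_n)-b(x,u)|\to 0$. One could instead split it as $|b(x_n,u_n)-b(x,u_n)|+|b(x,u_n)-b(x,u)|$ and apply the Lipschitz bound of Assumption~\ref{assump:lipschitz} to the first piece. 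Either way, the continuity of $b$ in $u$ is what is needed, and Assumption~\ref{cont_bdd} supplies it.
\item[(3)] For the diffusion term, Cauchy--Schwarz gives $\E|(\sigma(x_n)-\sigma(x))\xi|\le \|\sigma(x_n)-\sigma(x)\|\sqrt{h}\le K\sqrt{h}\,|x_n-x|$, using the Frobenius norm and Assumption~\ref{assump:lipschitz}.
\end{itemize}
All three terms vanish as $n\to\infty$, which proves the $W_1$ claim.

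\emph{Weak convergence.} Both measures have finite first moments, since $b$ and $\sigma$ are bounded. Convergence in $W_1$ implies weak convergence, which gives item (i) of Assumption~\ref{assum:weak_feller_1}. To see this directly: for bounded Lipschitz $f$ the coupling gives $|\E f(X_n)-\E f(X)|\le \mathrm{Lip}(f)\,\E|X_n-X|\to 0$, and convergence against bounded Lipschitz test functions characterizes weak convergence (portmanteau theorem).

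\emph{Main obstacle.} The only delicate point is the joint dependence on $u$. Assumption~\ref{assump:lipschitz} is uniform in $u$ but gives no modulus in $u$, so the $u$-continuity must come from the joint continuity of $b$ in Assumption~\ref{cont_bdd}. Since $\sigma$ does not depend on $u$, no further difficulty arises. The nondegeneracy assumption is not needed here; it would only be needed for total-variation continuity of $\cT_h$.
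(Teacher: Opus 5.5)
Your proof is correct and follows the paper's argument: the same synchronous coupling through a common Gaussian increment, the same three-term bound on $\E|X_n-X|$, joint continuity of $b$ for the drift term, and the fact that $W_1$ convergence implies weak convergence. The only differences are cosmetic. You bound the diffusion term by $\sqrt{h}\,\|\sigma(x_n)-\sigma(x)\|$ via Jensen, where the paper uses the looser $\sqrt{hd}\,\|\sigma(x_n)-\sigma(x)\|$. You also invoke the Lipschitz bound on $\sigma$, where the paper uses only its continuity.
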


\begin{proof}
By Assumption \ref{cont_bdd}, the drift $b(x, u)$ is jointly continuous in $(x, u)$ and the diffusion matrix $\sigma(x)$ is continuous. Let $(x_n, u_n) \to (x, u)$. As in the previous proof, we construct a coupling using the identical realization of the Brownian increment $\Delta W_k \sim \mathcal{N}(0, h I_d)$ for both transitions:
\begin{align*}
X_{k+1}^{EM}(x_n, u_n) &= x_n + b(x_n, u_n)h + \sigma(x_n)\Delta W_k, \\
X_{k+1}^{EM}(x, u) &= x + b(x, u)h + \sigma(x)\Delta W_k.
\end{align*}
Taking expectations and using $\E[|\Delta W_k|] \le \sqrt{hd}$,
\begin{align*}
\E\!\left[|X_{k+1}^{EM}(x_n, u_n) - X_{k+1}^{EM}(x, u)|\right] &\le |x_n - x| + h |b(x_n, u_n) - b(x, u)| + \sqrt{hd}\, \|\sigma(x_n) - \sigma(x)\|.
\end{align*}
By the definition of the $W_1$ metric as an infimum over couplings,
\[
W_1\left(\cT_h(\cdot \mid x_n, u_n), \cT_h(\cdot \mid x, u)\right) \le |x_n - x| + h |b(x_n, u_n) - b(x, u)| + \sqrt{hd}\, \|\sigma(x_n) - \sigma(x)\|.
\]
As $n \to \infty$, we have $|x_n - x| \to 0$; by the joint continuity of $b$, it follows that $|b(x_n, u_n) - b(x, u)| \to 0$, and similarly the continuity of $\sigma$ guarantees that $\|\sigma(x_n) - \sigma(x)\| \to 0$. Consequently, the upper bound vanishes. Because convergence in the Wasserstein-1 metric implies weak convergence of probability measures, we conclude that $\cT_h(\cdot \mid x_n, u_n) \to \cT_h(\cdot \mid x, u)$ weakly.
\end{proof}

\begin{theorem}\label{thm:tv_continuity_cont}
If $g$ is continuous, then the observation kernel $Q_h$ is continuous in the state with respect to the total variation distance:
\[
\|Q_h(\cdot \mid x_n) - Q_h(\cdot \mid x)\|_{TV} \to 0 \qquad \text{whenever } x_n \to x.
\]
\end{theorem}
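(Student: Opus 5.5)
The kernel $Q_h(\cdot \mid x)$ in \eqref{eq:Q_h_def} is a Gaussian $\mathcal{N}(g(x)h, hI_M)$ with a covariance that does not depend on $x$, so we will reduce the claim to an explicit bound on the total variation distance between two Gaussians that differ only in their means. Write $m_n := g(x_n)h$ and $m := g(x)h$. Both measures have strictly positive densities with respect to Lebesgue measure on $\R^M$, so $\|Q_h(\cdot\mid x_n) - Q_h(\cdot\mid x)\|_{TV}$ is (up to the normalization convention, a factor of $2$ at most) the $L^1$ distance between the densities $\varphi_h(y - m_n)$ and $\varphi_h(y-m)$, where $\varphi_h$ is the $\mathcal{N}(0,hI_M)$ density.

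The key step is to bound this $L^1$ distance by a function of $|m_n - m|$ that vanishes at zero. We will use Pinsker's inequality together with the closed-form relative entropy between Gaussians with equal covariance, $D(\mathcal{N}(m_n,hI)\,\|\,\mathcal{N}(m,hI)) = |m_n-m|^2/(2h)$. This gives
\[
\|Q_h(\cdot\mid x_n) - Q_h(\cdot\mid x)\|_{TV} \le C\sqrt{\tfrac{|m_n-m|^2}{h}} = C\,|g(x_n)-g(x)|\,\sqrt{h},
\]
with $C$ a universal constant determined by the TV normalization. An alternative that avoids Pinsker is to project onto the direction $e = (m_n-m)/|m_n-m|$. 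The problem then reduces to two one-dimensional Gaussians, whose exact TV distance is $2\Phi(|m_n-m|/(2\sqrt h)) - 1 \le |m_n-m|/\sqrt{2\pi h}$. Either route gives the same order.

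Finally, continuity of $g$ yields $|g(x_n)-g(x)|\to 0$ whenever $x_n \to x$, so the bound tends to zero and the theorem follows. If $g$ is in addition Lipschitz with constant $L_g$, as in Lemma~\ref{lemma:intermediate_convergence}, the same bound shows $Q_h$ is Lipschitz in total variation with constant $C L_g\sqrt h$. This is the Lipschitz claim anticipated at the start of this subsection.

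There is no real obstacle here. The only point needing care is to keep the TV normalization consistent with the paper's convention. As a fallback we could argue by Scheffé's lemma instead: the densities converge pointwise because $g$ is continuous, and pointwise convergence of densities implies $L^1$ convergence. That argument gives continuity but no explicit rate.
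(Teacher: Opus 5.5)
Your proposal is correct and follows the paper's proof: Pinsker's inequality combined with the closed-form relative entropy $h|g(x_n)-g(x)|^2/2$ between equal-covariance Gaussians, with Scheff\'e's lemma as the rate-free alternative (which the paper also records). Your one-dimensional projection computation and the resulting Lipschitz constant $C L_g\sqrt{h}$ are also correct, but they go beyond what the continuity statement needs.
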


\begin{proof}
The kernels $Q_h(\cdot \mid x_n) = \mathcal{N}(g(x_n)h, hI_M)$ and $Q_h(\cdot \mid x) = \mathcal{N}(g(x)h, hI_M)$ are Gaussians with identical covariance, so by Pinsker's inequality  \cite[Lemma~2.5]{tsybakov2009}, together with the Gaussian relative entropy formula (see, e.g., \cite[Chapter~9, Section~1]{Kullback59})
\[
D:=D\left( \mathcal{N}(\mu, \Sigma) \,\big\|\, \mathcal{N}(\mu', \Sigma) \right) = \tfrac{1}{2} (\mu - \mu')^\top \Sigma^{-1} (\mu - \mu'),
\]
gives $\|\mathcal{N}(\mu, \Sigma) - \mathcal{N}(\mu', \Sigma)\|_{TV} \le \sqrt{2D}$.
If $x_n \to x$, $D \to 0$ by the continuity of $g$, and the claim follows.
\end{proof}
The statement also follows by Scheffe's lemma. Each kernel admits a density with respect to the Lebesgue measure on $\R^M$,
\[
q_h(x,y) = (2\pi h)^{-M/2} \exp\Bigl( -\tfrac{1}{2h}\,\bigl|y - g(x)h\bigr|^2 \Bigr),
\]
and if $x_n \to x$ then $q_h(x_n, y) \to q_h(x,y)$ for every $y \in \R^M$, by the continuity of $g$. Since $q_h(x_n, \cdot)$ and $q_h(x,\cdot)$ are probability densities, by Scheffe's lemma implies convergence in $L^1(\R^M)$, which is exactly $\|Q_h(\cdot \mid x_n) - Q_h(\cdot \mid x)\|_{TV} \to 0$.

As a result of Theorems \ref{thm:joint_weak_continuity} and \ref{thm:tv_continuity_cont}, we obtain the following.

\begin{corollary}\label{cor:filter_weak_feller}
Suppose Assumptions \ref{cont_bdd}, \ref{nondeg}, and \ref{assump:lipschitz} hold; recall that $g$ is continuous and bounded. Fix $h > 0$. Because the state transition kernel $\cT_h$ is weakly continuous (Theorem~\ref{thm:joint_weak_continuity}) and the observation kernel $Q_h$ is continuous in total variation (Theorem~\ref{thm:tv_continuity_cont}), by Theorem~\ref{thm:weakfeller}-(i) the transition kernel $\eta_h$ of the non-linear filter satisfies the weak Feller property.
\end{corollary}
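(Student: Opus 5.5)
The corollary follows by checking the two hypotheses of Theorem~\ref{thm:weakfeller}(i), that is, Assumption~\ref{assum:weak_feller_1}, for the Euler--Maruyama POMDP $(\R^d, \U, \R^M, \cT_h, Q_h, \hat c_h)$ with $h>0$ fixed.

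First, for Assumption~\ref{assum:weak_feller_1}(i), we invoke Theorem~\ref{thm:joint_weak_continuity}. For any $(x_n,u_n)\to(x,u)$ it gives $W_1(\cT_h(\cdot\mid x_n,u_n),\cT_h(\cdot\mid x,u))\to 0$. Convergence in $W_1$ implies weak convergence, since bounded Lipschitz test functions determine weak convergence on $\R^d$. Hence $\cT_h$ is weakly continuous in $(x,u)$.

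Second, for Assumption~\ref{assum:weak_feller_1}(ii), the observation kernel is $Q_h(\cdot\mid x)=\mathcal{N}(g(x)h,hI_M)$ and does not depend on $u$. Take $(x_n,u_n)\to(x,u)$. Then $\|Q_h(\cdot\mid x_n,u_n)-Q_h(\cdot\mid x,u)\|_{TV}=\|Q_h(\cdot\mid x_n)-Q_h(\cdot\mid x)\|_{TV}$, and this tends to $0$ by Theorem~\ref{thm:tv_continuity_cont}, using only the continuity of $g$. So total variation continuity holds jointly in $(x,u)$.

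With both conditions verified, Theorem~\ref{thm:weakfeller}(i), the result of \cite{FeKaZg14}, applies directly, and the filter kernel $\eta_h$ on $\mathcal{P}(\R^d)\times\U$ is weak Feller. The one point that needs care is that the model really is a standard POMDP of the form assumed in \cite{FeKaZg14}. This requires the measurement $\bar Y^h_{k+1}$ to be generated from $X^{EM}_k$ alone, with fresh noise $\Delta B_k$ independent of the past and of $\Delta W_k$; this is precisely the structure built into \eqref{eq:euler_obs}. There is an indexing subtlety: in \eqref{eq:obs_increment} the measurement carrying $X^{EM}_k$ arrives at step $k+1$. I would handle it by regarding $Q_h(\cdot\mid X^{EM}_k)$ as the current-stage measurement kernel; equivalently, one can augment the state, and the continuity properties are unaffected. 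Apart from this bookkeeping, the argument is a direct application of the cited theorems and has no real obstacle.
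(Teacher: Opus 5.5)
Your proposal is correct and follows the paper's argument: the paper's proof consists only of verifying Assumption~\ref{assum:weak_feller_1} via Theorems~\ref{thm:joint_weak_continuity} and~\ref{thm:tv_continuity_cont} and then invoking Theorem~\ref{thm:weakfeller}(i). Your indexing remark goes beyond the paper, which does not address it, and of your two fixes only the state augmentation is sound. That is, take $\tilde X_{k+1}=(X^{EM}_{k+1},X^{EM}_k)$ with transition kernel $\cT_h(\cdot\mid x_1,u)\otimes\delta_{x_1}$ and observation kernel $Q_h(\cdot\mid x_2)$, apply Theorem~\ref{thm:weakfeller}(i) to this augmented model, and project the belief onto its first marginal; relabeling $\bar Y^h_{k+1}$ as a stage-$k$ measurement instead would let $U_k$ depend on it and change both the information structure and the filter recursion.
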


Given the weak Feller property of $\eta_h$, we can now apply the standard finite-model approximation results for weakly Feller belief MDPs \cite{HernandezLermaMCP, SaYuLi15c,SYLTAC2017POMDP,SaLiYuSpringer} to construct a finite-state, finite-action approximation of the discrete-time Euler--Maruyama POMDP whose optimal policy is near optimal for the original continuous-time controlled diffusion. The next section makes this explicit.

\section{Asymptotic Optimality By Quantization}

With the regularity properties of Section~\ref{asymptotic_continous}, we now quantize the action, belief and measurement spaces that the policy computed on the quantized model remains near optimal for the original model.

\subsection{Action Quantization}

First, we quantize the action space $\mathbb{U}$. By quantizing the action space, we can obtain a near-optimal policy using a finite set of actions. Since the action space $\mathbb{U}$ is assumed to be compact, it is totally bounded. Let $d_{\mathbb{U}}$ denote the metric on $\mathbb{U}$. There exists a sequence of finite sets $\mathbb{U}_n = \{u_{n, 1}, \ldots, u_{n, k_n}\} \subset \mathbb{U}$ such that $\mathbb{U}_n$ forms a $1/n$-net in $\mathbb{U}$. This means that for each $n$:
$$
\min_{i \in \{1, \ldots, k_n\}} d_{\mathbb{U}}(u, u_{n, i}) < 1/n \quad \text{for all } u \in \mathbb{U}.
$$

We define a mapping $\Upsilon_n$ that projects any policy $\gamma$ onto this finite set of actions:
$$
\Upsilon_n(\gamma)(\pi) = \arg\min_{u \in \mathbb{U}_n} d_{\mathbb{U}}(\gamma(\pi), u),
$$
where ties are broken ensures to preserve measurability.

Because the belief transition kernel satisfies the weak Feller property (Corollary~\ref{cor:filter_weak_feller}) and the cost functions are continuous and bounded, solving the model with this finite action space yields a near optimal solution for the finite-horizon problem as $n \to \infty$; see \cite[Chapter 3]{SaLiYuSpringer}. (Similarly, for the discounted cost criterion, the difference between the optimal cost of the finite-action model and that of the original model converges to $0$ uniformly on every compact subset of $\cP(\X)$ \cite[Lemma 3.19]{SaLiYuSpringer}.) Therefore, for the remainder of the analysis, we can assume without loss of generality that the action space $\U$ is finite.

\subsection{Belief Quantization}

The next step is to discretize the continuous belief space $\cP(\X)$, where the underlying state space is $\X = \R^d$. Throughout this subsection, the time step $h$ of the Euler--Maruyama POMDP is fixed and suppressed from the notation: we write $\eta := \eta_h$ for its belief transition kernel, $\tilde{c}(\pi, u) := \int_\X \hat{c}_h(x,u)\, \pi(dx)$ for the induced stage cost, and $N := N_h$ for the horizon. Under our standing assumptions (bounded, Lipschitz drift and diffusion) and an initial distribution with a finite second moment, the conditional law $\pi_k$ of the state given the observation history takes values, almost surely, in the space of probability measures with finite second moments,
\[
 \cP_2(\X) := \Bigl\{\mu \in \cP(\X) : \int_{\X} |x|^2\, \mu(dx) < \infty \Bigr\}.
\]
Define an increasing family of relatively compact subsets
\[
 M_n := \Bigl\{\mu \in \cP(\X) : \int_{\X} |x|^2\, \mu(dx) < n \Bigr\},\qquad \cP_2(\X) = \bigcup_{n=1}^{\infty} M_n.
\]
For every $\mu \in M_n$ and every $R > 0$, $\mu(\{x : |x| > R\}) \le n/R^2$ by Markov's inequality, so the family $M_n$ is uniformly tight; hence, by Prohorov's theorem, $M_n$ is relatively compact in $\cP(\X)$ under the weak topology. Let $\{\vartheta_n\}$ be a sequence of probability measures with $\vartheta_n \in \cP(M_n^c)$ for each $n\ge 1$, acting as a representative distribution over the complement of $M_n$.

First, we define a sequence of truncated MDPs, denoted $\text{c-MDP}_n$, to approximate the exact belief MDP. For each $n$, the state space is restricted to $M_n \cup \{\Delta_n\}$, where the isolated point $\Delta_n \notin M_n$ represents the complement $M_n^c$. We define the transition probability $\eta_n$ on $M_n \cup \{\Delta_n\}$ and the one-stage cost function $\tilde{c}_n$ on $(M_n \cup \{\Delta_n\}) \times \U$ as follows:
\[
\eta_n(\cdot \mid \pi, u) =
\begin{cases}
\eta(\cdot \cap M_n \mid \pi, u) + \eta(M_n^c \mid \pi, u)\, \delta_{\Delta_n}(\cdot), & \text{if } \pi \in M_n, \\[2pt]
\displaystyle\int_{M_n^c} \left( \eta(\cdot \cap M_n \mid z, u) + \eta(M_n^c \mid z, u)\, \delta_{\Delta_n}(\cdot) \right) \vartheta_n(dz), & \text{if } \pi = \Delta_n,
\end{cases}
\]
and
\[
\tilde{c}_n(\pi, u) =
\begin{cases}
\tilde{c}(\pi, u), & \text{if } \pi \in M_n, \\[2pt]
\displaystyle\int_{M_n^c} \tilde{c}(z, u)\, \vartheta_n(dz), & \text{if } \pi = \Delta_n.
\end{cases}
\]

With these components, $\text{c-MDP}_n = (M_n \cup \{\Delta_n\}, \U, \eta_n, \tilde{c}_n)$ is a well-defined Markov decision process. Since the original cost function $c$ is uniformly bounded, the expected cost accumulated while the belief state is in the overflow bin $\Delta_n$ is asymptotically negligible. Therefore, as $n \to \infty$, the optimal cost of this truncated model converges to the optimal cost of the original model uniformly on any compact subset of initial beliefs in $\cP_2(\X)$; see \cite[Chapter 4]{SaLiYuSpringer}.

Having reduced the problem to $\text{c-MDP}_n$, we now quantize the relatively compact set $M_n$ to compute a near optimal policy. We partition $M_n$ into a finite collection of disjoint Borel sets $\{B_{n,i}\}_{i=1}^{N_n-1}$ and treat the last cell as the overflow bin $B_{n,N_n} = \{\Delta_n\}$. The resolution of this quantization is measured by the maximum cell diameter under the Wasserstein-1 metric:
\begin{equation}
L_n^{-} := \max_{i=1, \ldots, N_n-1} \sup_{\pi, \pi^{\prime} \in B_{n,i}} W_1(\pi, \pi^{\prime}).
\end{equation}

The asymptotic convergence of this quantization follows from standard discretization results for Borel MDPs (see \cite{Ber75, SaLiYuSpringer}). Because the belief transition kernel $\eta$ is weak Feller and the cost functions are continuous and bounded, the discretization error remains bounded over the finite horizon $N$ and vanishes with the resolution. Specifically, for any compact subset $\cK \subset \X$, by the finite-state quantization convergence results for weak Feller belief MDPs (see \cite[Theorem~4.27]{SaLiYuSpringer} and \cite[Theorem~3.16]{SYLTAC2017POMDP}), the optimal value function of the fully quantized finite-state model, $\hat{J}_N$, converges uniformly to the optimal value function of the exact belief MDP, $J_N^*$:
\begin{equation} \label{eq:value_convergence}
\sup_{\pi_0 \in \cP(\cK)} \left|\hat{J}_N(\pi_0) - J_N^*(\pi_0)\right| \to 0 \quad \text{as } L_n^{-} \to 0 \text{ and } n \to \infty.
\end{equation}
Furthermore, extending an optimal policy of this finite-state approximate model, denoted $\hat{\gamma}$, to the entire belief space via the quantization map yields asymptotically optimal performance: the expected cost of applying $\hat{\gamma}$ to the exact discrete-time Euler--Maruyama POMDP satisfies
\begin{equation} \label{eq:policy_convergence}
\sup_{\pi_0 \in \cP(\cK)} \left|J_N(\pi_0, \hat{\gamma}) - J_N^*(\pi_0)\right| \to 0 \quad \text{as } L_n^{-} \to 0 \text{ and } n \to \infty.
\end{equation}

In summary, the continuous-time diffusion with partial information is discretized in time via the Euler--Maruyama scheme. This step ensures the POMDP conditional independence structure and the weak Feller regularity of the filter. This permits the quantization of the belief space. The resulting fully finite, computable MDP generates a policy that is asymptotically near optimal for the intermediate discrete-time model and, by Theorem~\ref{thm:main_approximation}, for the original continuous-time diffusion.

\begin{remark}[Discounted cost criterion]\label{rem:discounted}
While the analysis in this section was presented for the finite-horizon criterion, it is directly applicable to the infinite-horizon discounted cost criterion as well. Recall from Theorem~\ref{thm:main_approximation_discounted} that the discrete-time Euler--Maruyama POMDP under the discounted cost is asymptotically near optimal for the continuous-time problem. By \cite[Theorem 4.29]{SaLiYuSpringer} the quantization of the belief space is directly applicable to the discounted cost problem: constructing the finite-state approximate model via the quantization diameter $L_n^{-}$ yields the uniform convergence
\[
\sup_{\pi_0 \in \cP(\cK)} \left|\hat{J}_\beta(\pi_0) - J_\beta^*(\pi_0)\right| \to 0 \quad \text{as } L_n^{-} \to 0,
\]
for any compact subset $\cK \subset \X$. Similarly to the discussion above, the policy designed for the approximate model is near optimal for the original model. Thus, the entire chain of time, action, and belief discretizations provides a fully computable, near optimal policy for the continuous-time diffusion under the discounted cost criterion as well.
\end{remark}

\subsection{Observation Quantization and the Finite POMDP Approximation}\label{sec:obs_quant}

Having established the quantization of the action and belief spaces, we have derived a model with finite state and action sets. In this section, we demonstrate that the continuous observation space can also be quantized. (Alternatively, when filter stability holds, finite-window policies are near optimal \cite{kara2021convergence}, so that a near optimal policy can be computed under quantized finite measurements; moreover, for the discounted cost criterion, Q-learning algorithm is applicable \cite{kara2021convergence}.)

We first formalize the theoretical framework for observation quantization. We need to assume that the observation kernel admits a density with respect to a reference measure $\lambda(dy)$ (e.g., the Lebesgue measure), such that
\[
Q(dy \mid x) = q(x, y)\, \lambda(dy),
\]
where $q(x, y)$ is the corresponding density function.

Because the observation space $\Y = \R^M$ is not compact, we restrict attention to a large compact ball $K_{\Y} \subset \R^M$ and partition it into disjoint Borel sets $\{B_i\}_{i=1}^{n-1}$; the remainder forms the overflow bin $B_n = \R^M \setminus K_{\Y}$. (Since the observation kernels \eqref{eq:Q_h_def} are Gaussian with uniformly bounded means, the mass of the overflow bin is uniformly small by Gaussian tail bounds once $K_\Y$ is chosen large enough. As in the previous subsection, we may restrict the initial distributions to those with finite second moments; since the cost function is bounded, the cost contribution of the overflow bin vanishes for both the finite-horizon and the discounted criteria.) A finite representative set is defined as
\[
\Y_n := \{ y_1, \dots, y_n \}, \qquad y_i \in B_i,
\]
and the quantization map $\psi_{\Y} : \Y \to \Y_n$ is given by $\psi_{\Y}(y) = y_i$ if $y \in B_i$. Using this quantization map, we define a new observation channel $Q_n$ by
\[
Q_n(y_i \mid x) := Q(B_i \mid x).
\]

\begin{assumption}\label{assum:quantMeasurementL}
The observation density $q(x, y)$ is Lipschitz continuous in $y$, uniformly over $x \in \X$, with Lipschitz constant $\alpha_{\Y}$:
\[
|q(x, y) - q(x, y')| \leq \alpha_{\Y} |y - y'|, \qquad \forall y, y' \in \Y, \; x \in \X.
\]
\end{assumption}

To facilitate the analysis, we define an intermediate observation channel $\tilde{Q}_n$ with density $\tilde{q}$ with respect to $\lambda$, where for $y' \in B_i$,
\[
\tilde{q}(x, y') = \frac{Q(B_i \mid x)}{\lambda(B_i)} = \frac{1}{\lambda(B_i)} \int_{B_i} q(x, y)\, \lambda(dy).
\]
Although $\tilde{Q}_n$ and $Q_n$ operate over different spaces, they are informationally equivalent and thus yield identical value functions \cite{devranby2025near}. The approximation error is governed by the maximum diameter of the bounded quantization bins:
\[
L_{\Y_n} := \max_{i \in \{1, \dots, n-1\}} \sup_{y, y' \in B_i} |y - y'|.
\]
Under Assumption \ref{assum:quantMeasurementL}, \cite{devranby2025near} establishes that the total variation distance between the original and quantized channels satisfies
\[
d_{TV}(\tilde{Q}_n, Q) := \sup_x \|\tilde{Q}_n(\cdot \mid x) - Q(\cdot \mid x)\|_{TV} \leq \alpha_{\Y}\, L_{\Y_n}.
\]

In the finite-horizon setting, this total variation bound directly controls the performance degradation: \cite[Theorem 6.2]{YukselOptimizationofChannels} establishes that for any policy $\gamma$ and observation kernels $Q, Q'$, the cost difference satisfies
\[
\left|J_N(\mu, Q', \gamma) - J_N(\mu, Q, \gamma)\right| \leq \sum_{k=1}^N  \|c\|_\infty \cdot k \cdot \sup_{x \in \X} \|Q'(\cdot \mid x) - Q(\cdot \mid x)\|_{TV}.
\]
Substituting the total variation bound yields a performance guarantee for the observation quantization:
\begin{equation}\label{eq:obs_quant_finiteN}
\left|J_N(\mu, Q_n, \gamma) - J_N(\mu, Q, \gamma)\right| \leq \frac{N (N+1)}{2} \|c\|_\infty\, \alpha_{\Y}\, L_{\Y_n}.
\end{equation}

\begin{remark}[Discounted cost criterion]
For the infinite-horizon discounted cost setting with discount factor $\beta \in (0,1)$, similar bounds are available: \cite[Theorem 6.2]{YukselOptimizationofChannels} and \cite[Theorem 5.4]{devranby2025near} show that using the optimal policy $\hat{\gamma}$ designed for the quantized model results in a bounded performance degradation:
\[
J_\beta(\mu, Q, \hat{\gamma}) - J_\beta^*(\mu, Q) \leq \frac{\beta}{(1 - \beta)^2} \|c\|_\infty\, \alpha_{\Y}\, L_{\Y_n}.
\]
\end{remark}

As the resolution $L_{\Y_n} \to 0$, the performance loss vanishes, proving that the discrete-time model with a finite observation set yields an asymptotically near optimal solution. In the following lemma, we show that the discrete-time Euler--Maruyama observation channel satisfies the Lipschitz density requirement of Assumption \ref{assum:quantMeasurementL}, with an explicit constant.

\begin{lemma}\label{lem:obs_density_lipschitz}
Let the discrete-time observation model be given by the Euler--Maruyama increment \eqref{eq:obs_increment}:
\[
\bar{Y}_{k+1}^h =  g(X^{EM}_{k}) h + (B_{(k+1)h} - B_{kh}),
\]
where $(B_t)_{t\geq 0}$ is an $\R^M$-valued standard Wiener process. Then the conditional distribution of $\bar{Y}_{k+1}^h$ given $X^{EM}_{k} = x$ admits a density $q_h(x, y)$ with respect to the Lebesgue measure on $\R^M$ that is globally Lipschitz continuous in $y$, uniformly in $x$:
\[
\left| q_h(x,y) - q_h(x,y') \right| \leq \alpha_\Y | y - y' |, \qquad \alpha_\Y := \frac{1}{(2 \pi h)^{M/2} \sqrt{e h}}.
\]
\end{lemma}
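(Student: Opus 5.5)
The plan is to compute the density explicitly and bound its gradient in $y$ uniformly. Conditioned on $X^{EM}_k = x$, the increment $B_{(k+1)h}-B_{kh}$ is independent of $X^{EM}_k$ and distributed as $\mathcal{N}(0, hI_M)$. Hence $\bar{Y}^h_{k+1}$ is Gaussian with mean $m := g(x)h$ and covariance $hI_M$, exactly as in \eqref{eq:Q_h_def}. Its Lebesgue density is
\[
q_h(x,y) = (2\pi h)^{-M/2} \exp\Bigl(-\tfrac{1}{2h}|y - m|^2\Bigr),
\]
which is the same density already written down after Theorem~\ref{thm:tv_continuity_cont}. Because $q_h(x,\cdot)$ is a translate of the fixed function $\phi(z) := (2\pi h)^{-M/2} e^{-|z|^2/(2h)}$ by the vector $m$, any Lipschitz constant for $\phi$ is automatically a Lipschitz constant for $q_h(x,\cdot)$, uniformly in $x$. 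So it suffices to bound $\sup_z |\nabla \phi(z)|$.

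Since $\phi$ is smooth, the mean value inequality along the segment from $y-m$ to $y'-m$ gives
\[
|q_h(x,y) - q_h(x,y')| \le \sup_{z} |\nabla\phi(z)|\, |y-y'|.
\]
Differentiating, $\nabla\phi(z) = -\tfrac{z}{h}\phi(z)$. Writing $r = |z|$, this gives
\[
|\nabla\phi(z)| = (2\pi h)^{-M/2}\,\tfrac{r}{h}\, e^{-r^2/(2h)}.
\]

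The only genuine computation is maximizing the one-variable function $f(r) = \tfrac{r}{h} e^{-r^2/(2h)}$ over $r \ge 0$. Its derivative is $f'(r) = \tfrac{1}{h}\bigl(1 - \tfrac{r^2}{h}\bigr)e^{-r^2/(2h)}$, which vanishes only at $r=\sqrt{h}$. Since $f(0)=0$ and $f(r)\to 0$ as $r\to\infty$, this critical point is the global maximum, with value
\[
f(\sqrt{h}) = \frac{\sqrt{h}}{h}\, e^{-1/2} = \frac{1}{\sqrt{e h}}.
\]
Therefore $\sup_z |\nabla\phi(z)| = (2\pi h)^{-M/2}/\sqrt{eh} = \alpha_\Y$, which yields the claimed bound.

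No step is a real obstacle. The only points needing care are the vector-valued mean value inequality, which requires only $C^1$ regularity along the segment, and the observation that the bound is independent of both $x$ and $g$, because $g$ enters only through the translation $m$.
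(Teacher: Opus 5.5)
Your proposal is correct and follows the paper's proof: write down the Gaussian density of $\mathcal{N}(g(x)h, hI_M)$, apply the mean value theorem in $y$, and maximize $r \mapsto \frac{r}{h}e^{-r^2/(2h)}$ at $r=\sqrt{h}$ to obtain $\alpha_\Y$. Your remark that $q_h(x,\cdot)$ is a translate of a fixed Gaussian states the uniformity in $x$ a bit more explicitly than the paper does, but the argument is the same.
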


\begin{proof}
Fix $x \in \R^d$. Since $B_{(k+1)h} - B_{kh} \sim \mathcal{N}(0, h I_M)$ is independent of $X^{EM}_k$, the conditional distribution of the measurement is Gaussian, $\bar{Y}_{k+1}^h \mid \{X^{EM}_k = x\} \sim \mathcal{N}(g(x)h,\, h I_M)$, with density
\[
q_h(x, y) = \frac{1}{(2\pi h)^{M/2}} \exp\left( -\frac{|y - g(x)h|^2}{2h} \right), \qquad
| \nabla_y q_h(x, y) | = \frac{| y - g(x)h |}{h}\, q_h(x, y).
\]

To bound $|q_h(x, y) - q_h(x, y')|$, we apply the mean value theorem:
$$
|q_h(x, y) - q_h(x, y')| \leq \sup_{z \in [y, y']} | \nabla_y q_h(x, z)| \cdot | y - y' |.
$$

Define $r := | z - g(x)h | \ge 0$. The norm of the gradient can be expressed as a function of $r$:
\begin{equation}
v(r) := | \nabla_y q_h(x, z)| = \frac{1}{h (2\pi h)^{M/2}}\, r \exp\left( -\frac{r^2}{2h} \right).
\end{equation}
Differentiating $v(r)$ with respect to $r$ yields:
\begin{equation}
\frac{d}{dr} v(r) = \frac{1}{h (2\pi h)^{M/2}} \exp\left( -\frac{r^2}{2h} \right) \left( 1 - \frac{r^2}{h} \right).
\end{equation}
$\frac{d}{dr} v(r) =0 $ reveals a unique critical point at $r = \sqrt{h}$. Because the derivative is positive for $r < \sqrt{h}$ and negative for $r > \sqrt{h}$, this point is the strict global maximum. Substituting $r = \sqrt{h}$ provides the supremum:
\[
\sup_{z} | \nabla_y q_h(x, z)| = v(\sqrt{h}) = \frac{1}{h (2\pi h)^{M/2}}\, \sqrt{h}\, e^{-1/2} = \frac{1}{(2 \pi h)^{M/2} \sqrt{e h}}.
\]

Therefore, for all $y, y' \in \mathbb{R}^M$, we obtain the global bound:
$$
|q_h(x, y)  - q_h(x, y') | \leq \frac{1}{(2 \pi h)^{M/2} \sqrt{e h}} | y - y' |.
$$
\end{proof}

Consequently, the observation space of the Euler--Maruyama approximation can be discretized via quantization, ensuring that the resulting finite-observation POMDP yields a near optimal policy for the original continuous-time system.

\begin{corollary}\label{cor:finite_obs}
Suppose Assumptions \ref{cont_bdd}, \ref{nondeg}, and \ref{assump:lipschitz} hold, let $c$ and $c_T$ be bounded and Lipschitz continuous, let $g$ be bounded. Given a time step $h$ and quantization parameters, let $\hat{\gamma}_{f}$ denote an optimal policy of the finite-observation POMDP obtained from the Euler--Maruyama approximation through action and observation quantization. Then $\hat{\gamma}_{f}$, applied to the true continuous-time system, is asymptotically near optimal: for every $\varepsilon > 0$ there exist $h > 0$ and quantization parameters such that
\begin{equation}\label{eq:final_eps}
    J_T(x, \hat{\gamma}_{f}) \le J_T^*(x) + \varepsilon,
\end{equation}
where $J_T^*(x)$ is the optimal finite-horizon cost of the continuous-time partially observed diffusion \eqref{cont_model}--\eqref{obs_model}.
\end{corollary}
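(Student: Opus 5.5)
The proof is an $\varepsilon/3$ chaining argument along the approximation hierarchy: the true diffusion, the Euler--Maruyama POMDP with continuous observations, the action-quantized model, and the observation-quantized model. Fix $\varepsilon>0$. First, by Theorem~\ref{thm:main_approximation} (its last claim, covering near-optimal policies), choose $h$ so that $\tilde\epsilon(h)<\varepsilon/3$. Then any policy that is $\varepsilon'$-optimal for the Euler--Maruyama POMDP with discrete increments $\bar Y^h_{k}$ satisfies
\[
J_T(x,\gamma)\le J_T^*(x)+\tilde\epsilon(h)+\varepsilon'.
\]
It therefore suffices to show that, for this fixed $h$, $\hat\gamma_f$ is $2\varepsilon/3$-optimal for the Euler--Maruyama POMDP with the original Gaussian channel $Q_h$ and the original action set $\U$.

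With $h$ fixed, the action quantization step applies. The belief kernel $\eta_h$ is weak Feller by Corollary~\ref{cor:filter_weak_feller}, and the stage cost $\hat c_h$ is continuous and bounded; its Lipschitz bound was obtained in the proof of Lemma~\ref{lemma:intermediate_convergence}. The finite-action results of \cite[Chapter 3]{SaLiYuSpringer} then give $n$ such that the optimal value over $\U_n$ is within $\varepsilon/3$ of the optimal value over $\U$, at the fixed initial belief $\delta_x$. Any policy that is near optimal for the $\U_n$-model is also admissible for the $\U$-model, so it inherits the same guarantee there.

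Next comes observation quantization. By Lemma~\ref{lem:obs_density_lipschitz}, $Q_h$ satisfies Assumption~\ref{assum:quantMeasurementL} with $\alpha_\Y=(2\pi h)^{-M/2}(eh)^{-1/2}$, which is finite for fixed $h$. Apply \eqref{eq:obs_quant_finiteN} with $N=N_h$ to both $\hat\gamma_f$ and an optimal policy $\gamma^\star$ of the finite-action model with channel $Q_h$. The standard two-sided argument then gives
\[
J_N(Q_h,\hat\gamma_f)\le J_N(Q_n,\hat\gamma_f)+\delta\le J_N(Q_n,\gamma^\star)+\delta\le J_N(Q_h,\gamma^\star)+2\delta,
\]
where $\delta=\tfrac{N(N+1)}{2}\|\hat c_h\|_\infty\alpha_\Y L_{\Y_n}$. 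Two points need care here.

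First, $\gamma^\star$ uses unquantized observations, so it is not obviously admissible for the $Q_n$ model. This is handled by working with the informationally equivalent channel $\tilde Q_n$ \cite{devranby2025near}. Its induced cost on the original observation space equals that of $Q_n$, and any policy for the continuous-observation model can be run on $\tilde Q_n$-generated data. The bound of \cite[Theorem 6.2]{YukselOptimizationofChannels} holds for any fixed policy, so it can be applied to $\gamma^\star$ under $\tilde Q_n$ versus $Q_h$.

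Second, the overflow bin $\R^M\setminus K_\Y$ is not covered by the Lipschitz diameter bound. I would add to $\delta$ the uniform Gaussian tail mass $\sup_x Q_h(K_\Y^c\mid x)$, multiplied by the same horizon factor, and make it small using $|g(x)h|\le\|g\|_\infty h$.

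Choose $K_\Y$ and then $L_{\Y_n}$ so that $2\delta<\varepsilon/3$, and sum the three errors.

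I expect the main obstacle to be the policy-admissibility bookkeeping in the observation step. One must make sure that $\hat\gamma_f$, which is defined as a function of quantized increments $\psi_\Y(\bar Y^h_k)$, is a legitimate policy in $\Gamma_{disc}$ for the diffusion, so that Theorem~\ref{thm:main_approximation} applies to it. It is, since it is a measurable function of the samples $Y_{kh}$. One must also check that the error bounds are stated at the single initial condition $x$ rather than uniformly over beliefs. The belief quantization of the previous subsection is not needed here, because the corollary refers only to action and observation quantization.
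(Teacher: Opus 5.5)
Your proposal is correct and follows the chain the paper intends: Theorem~\ref{thm:main_approximation} at a fixed small $h$, then action quantization via Corollary~\ref{cor:filter_weak_feller}, then observation quantization via Lemma~\ref{lem:obs_density_lipschitz} and \eqref{eq:obs_quant_finiteN}, with more careful bookkeeping than the paper provides (the $\tilde Q_n$ device for admissibility, a separate tail term for the overflow bin, and fixing $K_\Y$ before $L_{\Y_n}$, which also absorbs the factor $\lambda(K_\Y)$ that the stated bound $d_{TV}(\tilde Q_n,Q)\le\alpha_\Y L_{\Y_n}$ omits). Do state explicitly that Theorem~\ref{thm:main_approximation} rests on Lemma~\ref{lemma:intermediate_convergence}, which assumes $g$ Lipschitz while the corollary only lists $g$ bounded (the paper's argument has the same dependence), so either add that hypothesis or note that for continuous bounded $g$ term (II) of that lemma still tends to zero uniformly over policies, just without the $\sqrt{h}$ rate.
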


\section{Conclusion}\label{conclusion}

This paper established a rigorous approximation framework for continuous-time controlled diffusions under partial information. By constructing a discrete-time Euler--Maruyama approximation, we proved that the resulting partially observable model has several regularity properties. Specifically, we established the total variation continuity of the observation kernel and the weak continuity of the state transition kernel, which together guarantee the weak Feller property of the corresponding non-linear filter.

These results justify the quantization of the action, belief, and observation spaces. Through a step-by-step error analysis, we demonstrated that this sequential chain of approximations preserves near optimality: the approximation errors introduced by time discretization and quantization vanish asymptotically, and the Euler--Maruyama substitution error admits an explicit $O(\sqrt{h})$ bound, uniformly over policies. Consequently, an optimal policy derived from the resulting fully finite, computable POMDP is asymptotically near optimal for the original continuous-time system. This establishes a complete, theoretically verified path from intractable continuous-time stochastic control problems to implementable numerical methods.

\bibliographystyle{plain}
\bibliography{SerdarBibliography,DKY,Somnath_Robustness}% common bib file

\begin{thebibliography}{10}

\bibitem{alt2020pomdps}
B.~Alt, M.~Schultheis, and H.~Koeppl.
\newblock {P}{O}{M}{D}{P}s in continuous time and discrete spaces.
\newblock {\em Advances in Neural Information Processing Systems},
  33:13151--13162, 2020.

\bibitem{bain2009fundamentals}
A.~Bain and D.~Crisan.
\newblock {\em Fundamentals of stochastic filtering}, volume~3.
\newblock Springer, 2009.

\bibitem{Jakobsen2006}
Guy Barles and Espen~R. Jakobsen.
\newblock Error bounds for monotone approximation schemes for
  hamilton--jacobi--bellman equations.
\newblock {\em SIAM Journal on Numerical Analysis}, 43(2):540--558, 2005.

\bibitem{benevs1971existence}
V.~E. Bene{\v{s}}.
\newblock Existence of optimal stochastic control laws.
\newblock {\em SIAM Journal on Control}, 9(3):446--472, 1971.

\bibitem{Ber75}
D.P. Bertsekas.
\newblock Convergence of discretization procedures in dynamic programming.
\newblock {\em IEEE Trans. Autom. Control}, 20(3):415--419, Jun. 1975.

\bibitem{bismut1982partially}
J.-M. Bismut.
\newblock Partially observed diffusions and their control.
\newblock {\em SIAM Journal on Control and Optimization}, 20(2):302--309, 1982.

\bibitem{Blackwell2}
D.~Blackwell.
\newblock Memoryless strategies in finite-stage dynamic programming.
\newblock {\em Annals of Mathematical Statistics}, 35:863--865, 1964.

\bibitem{Bog07}
V.I. Bogachev.
\newblock {\em Measure Theory: Volume II}.
\newblock Springer, 2007.

\bibitem{BoArGh12}
V.~S. Borkar, A.~Arapastathis, and M.~K. Ghosh.
\newblock {\em Ergodic Control of Diffusion Processes}.
\newblock Cambridge, UK, Cambridge Uni. Press, 2012.

\bibitem{chigansky2009intrinsic}
P.~Chigansky, R.~Liptser, and R.~van Handel.
\newblock Intrinsic methods in filter stability.
\newblock {\em Handbook of Nonlinear Filtering}, 2009.

\bibitem{cohen2023exponential}
S.N. Cohen and E.~Fausti.
\newblock Exponential contractions and robustness for approximate wonham
  filters.
\newblock {\em arXiv preprint arXiv:2305.02256}, 2023.

\bibitem{cohen2025optimal}
S.N. Cohen, C.~Knochenhauer, and A.~Merkel.
\newblock Optimal adaptive control with separable drift uncertainty.
\newblock {\em SIAM Journal on Control and Optimization}, 63(2):1348--1373,
  2025.

\bibitem{crisan2022}
Dan Crisan, Alexander Lobbe, and Salvador Ortiz-Latorre.
\newblock {\em Pathwise Approximations for the Solution of the Non-Linear
  Filtering Problem}.
\newblock Springer International Publishing, Cham, 2022.

\bibitem{davis1972information}
M.~H.~A Davis and P.~Varaiya.
\newblock Information states for linear stochastic systems.
\newblock {\em Journal of Mathematical Analysis and Applications},
  37(2):384--402, 1972.

\bibitem{davis1973dynamic}
M.~H.~A Davis and P.~Varaiya.
\newblock Dynamic programming conditions for partially observable stochastic
  systems.
\newblock {\em SIAM Journal on Control}, 11(2):226--261, 1973.

\bibitem{diMasi81}
Giovanni~B. Di~Masi and Wolfgang~J. Runggaldier.
\newblock Continuous-time approximations for the nonlinear filtering problem.
\newblock {\em Applied Mathematics and Optimization}, 7(3):233--245, 1981.

\bibitem{diMasi82}
Giovanni~B. Di~Masi and Wolfgang~J. Runggaldier.
\newblock On approximation methods for nonlinear filtering.
\newblock In Sanjoy~K. Mitter and Antonio Moro, editors, {\em Nonlinear
  Filtering and Stochastic Control}, volume 972 of {\em Lecture Notes in
  Mathematics}, pages 249--259. Springer, Berlin, Heidelberg, 1982.

\bibitem{dugundji1951}
J.~Dugundji.
\newblock An extension of {T}ietze's theorem.
\newblock {\em Pacific Journal of Mathematics}, 1:353--367, 1951.

\bibitem{FeKaZg14}
E.A. Feinberg, P.O. Kasyanov, and M.Z. Zgurovsky.
\newblock Partially observable total-cost {M}arkov decision process with weakly
  continuous transition probabilities.
\newblock {\em Mathematics of Operations Research}, 41(2):656--681, 2016.

\bibitem{FlPa82}
W.H. Fleming and E.~Pardoux.
\newblock Optimal control for partially observed diffusions.
\newblock {\em SIAM J. Control Optim.}, 20(2):261--285, 1982.

\bibitem{HernandezLermaMCP}
O.~Hern{\'a}ndez-Lerma and J.~B. Lasserre.
\newblock {\em Discrete-Time {M}arkov Control Processes: Basic Optimality
  Criteria}.
\newblock Springer, 1996.

\bibitem{jakobsen2019improved}
E.~R. Jakobsen, A.~Picarelli, and C.~Reisinger.
\newblock Improved order 1/4 convergence for piecewise constant policy
  approximation of stochastic control problems.
\newblock 2019.

\bibitem{devranby2025near}
A.D. Kara, E.~Bayraktar, and S.~Y\"uksel.
\newblock Near optimal approximations and finite memory policies for pompds
  with continuous spaces.
\newblock {\em Journal of Systems Science and Complexity}, page~0, 2025.

\bibitem{KSYWeakFellerSysCont}
A.D Kara, N.~Saldi, and S.~Y\"uksel.
\newblock Weak {F}eller property of non-linear filters.
\newblock {\em Systems \& Control Letters}, 134:104--512, 2019.

\bibitem{kara2021convergence}
A.D Kara and S.~Y\"uksel.
\newblock Convergence of finite memory {Q}-learning for {POMDP}s and near
  optimality of learned policies under filter stability.
\newblock {\em Mathematics of Operations Research}, 48(4):2066–2093, 2023.

\bibitem{KaSh91}
I.~Karatzas and S.~E. Shreve.
\newblock {\em Brownian Motion and Stochastic Calculus}, volume 113 of {\em
  Graduate Texts in Mathematics}.
\newblock Springer, 2 edition, 1991.

\bibitem{krylov1999approximating}
N.V. Krylov.
\newblock Approximating value functions for controlled degenerate diffusion
  processes by using piece-wise constant policies.
\newblock 1999.

\bibitem{krylov2000rate}
N.V. Krylov.
\newblock On the rate of convergence of finite-difference approximations for
  {B}ellman's equations with variable coefficients.
\newblock {\em Probability Theory \& Related Fields}, 117(1), 2000.

\bibitem{krylov2008controlled}
N.V. Krylov.
\newblock {\em Controlled Diffusion Processes}, volume~14.
\newblock Springer, 2008.

\bibitem{Kullback59}
S.~Kullback.
\newblock {\em {Information Theory and Statistics}}.
\newblock Wiley, New York, 1959.

\bibitem{kushner1990numerical}
H.~J. Kushner.
\newblock Numerical methods for stochastic control problems in continuous time.
\newblock {\em SIAM Journal on Control and Optimization}, 28(5):999--1048,
  1990.

\bibitem{kushner2014partial}
H.~J. Kushner.
\newblock A partial history of the early development of continuous-time
  nonlinear stochastic systems theory.
\newblock {\em Automatica}, 50(2):303--334, 2014.

\bibitem{kushner2001numerical}
H.~J. Kushner and P.~G. Dupuis.
\newblock {\em Numerical Methods for Stochastic Control Problems in Continuous
  Time}, volume~24.
\newblock Springer Science \& Business Media, 2001.

\bibitem{kushner2012weak}
H.J. Kushner.
\newblock {\em Weak convergence methods and singularly perturbed stochastic
  control and filtering problems}.
\newblock Springer Science \& Business Media, 2012.

\bibitem{martinez2009}
Miguel Martinez, Sylvain Rubenthaler, and Etienne Tanr{\'e}.
\newblock Approximations of a continuous time filter: Application to optimal
  allocation problems in finance.
\newblock {\em Stochastic Analysis and Applications}, 27(2):270--296, 2009.

\bibitem{Munkres}
J.~R. Munkres.
\newblock {\em Topology}.
\newblock Prentice Hall, 2 edition, 2000.

\bibitem{PY25discapprox}
S.~Pradhan and S.~Y\"{u}ksel.
\newblock Discrete-time approximations of controlled diffusions with infinite
  horizon discounted and average cost.
\newblock {\em arXiv2502.05596}, 2025.

\bibitem{pradhanyuksel2023DTApprx}
Somnath Pradhan and Serdar Y{\"u}ksel.
\newblock Controlled diffusions under full, partial, and decentralized
  information: Existence of optimal policies and discrete-time approximations.
\newblock {\em SIAM Journal on Control and Optimization}, 63(5):3674--3702,
  2025.

\bibitem{Rhe74}
D.~Rhenius.
\newblock Incomplete information in {M}arkovian decision models.
\newblock {\em Ann. Statist.}, 2:1327--1334, 1974.

\bibitem{SaLiYuSpringer}
N.~Saldi, T.~Linder, and S.~Y\"uksel.
\newblock {\em Finite Approximations in Discrete-Time Stochastic Control:
  Quantized Models and Asymptotic Optimality}.
\newblock Springer, Cham, 2018.

\bibitem{saldiyukselGeoInfoStructure}
N.~Saldi and S.~Y\"uksel.
\newblock Geometry of information structures, strategic measures and associated
  control topologies.
\newblock {\em Probability Surveys}, 19:450--532, 2022.

\bibitem{SaYuLi15c}
N.~Saldi, S.~Y{\"u}ksel, and T.~Linder.
\newblock On the asymptotic optimality of finite approximations to {M}arkov
  decision processes with {B}orel spaces.
\newblock {\em Mathematics of Operations Research}, 42(4):945--978, 2017.

\bibitem{SYLTAC2017POMDP}
N.~Saldi, S.~Y\"uksel, and T.~Linder.
\newblock Finite model approximations for partially observed {M}arkov decision
  processes with discounted cost.
\newblock {\em IEEE Transactions on Automatic Control}, 65, 2020.

\bibitem{tallec2019making}
C.~Tallec, L.~Blier, and Y.~Ollivier.
\newblock Making deep {Q}-learning methods robust to time discretization.
\newblock In {\em International Conference on Machine Learning}, pages
  6096--6104. PMLR, 2019.

\bibitem{tsybakov2009}
A.~B. Tsybakov.
\newblock {\em Introduction to Nonparametric Estimation}.
\newblock Springer, New York, 2009.

\bibitem{YukselOptimizationofChannels}
S.~Y\"uksel and T.~Linder.
\newblock Optimization and convergence of observation channels in stochastic
  control.
\newblock {\em SIAM J. on Control and Optimization}, 50:864--887, 2012.

\bibitem{Yus76}
A.A. Yushkevich.
\newblock Reduction of a controlled {M}arkov model with incomplete data to a
  problem with complete information in the case of {B}orel state and control
  spaces.
\newblock {\em Theory Prob. Appl.}, 21:153--158, 1976.

\end{thebibliography}
\end{document}